\newtheorem{thm}{Theorem}[section]
\newtheorem{exa}[thm]{Example}
\newtheorem{lem}[thm]{Lemma}
\newtheorem{rem}[thm]{Remark}
\theoremstyle{definition}
\numberwithin{equation}{section}
\newcommand{\Hom}[3]{\operatorname{Hom}_{#1}\left(#2,#3\right)}
\newcommand{\ext}[4]{\operatorname{Ext}^{#1}_{#2}\left(#3,#4\right)}
\newcommand{\cd}[2]{\operatorname{cd}_{R}\left({#1}, #2\right)}
\newcommand{\Ht}[2]{\operatorname{ht}_{#1} {(#2)}}
\newcommand{\h}[3]{\operatorname{H}^{#1}_{#2}\left(#3\right)}
\newcommand{\gam}[2]{\Gamma_{#1}\left(#2\right)}
\newcommand{\ann}{\operatorname{Ann}}
\newcommand\supp {\operatorname{Supp}}
\newcommand\spec{\operatorname{Spec}}
\newcommand\ass {\operatorname{Ass}}
\newcommand\mass {\operatorname{MinAss}}
\newcommand\assh {\operatorname{Assh}}
\newcommand{\grad}[2]{\operatorname{grade}\left(#1,#2\right)}
\newcommand\depth{\operatorname{depth}}
\newcommand{\id}{\operatorname{inj\,dim}}
\newcommand\im{\operatorname{im}}
\newcommand\fP{\mathfrak P}
\newcommand\fa{\mathfrak a}
\newcommand\fb{\mathfrak b}
\newcommand\fc{\mathfrak c}
\newcommand\fm{\mathfrak m}
\newcommand\fn{\mathfrak n}
\newcommand\fp{\mathfrak p}
\newcommand\fq{\mathfrak q}
\newcommand\N{\mathbb N}
\begin{document}

\title[Annihilators of local cohomology and Ext]{Some bounds for the annihilators of local cohomology and Ext modules}%
\author[A.~fathi]{Ali Fathi}
\address{Department of Mathematics, Zanjan Branch,
Islamic Azad University,  Zanjan, Iran.}
\email{fathi\_ali@iauz.ac.ir,  alif1387@gmail.com}
\keywords{  Local cohomology modules, Ext modules, annihilator, primary decomposition.}
\subjclass[2010]{13D45, 13D07.}
\begin{abstract}
Let $\mathfrak a$ be an ideal of a commutative Noetherian ring $R$ and $t$ be a non-negative integer. Let $M$ and $N$  be two finitely generated $R$-modules.  In certain cases, we give some bounds under inclusion for the annihilators of $\operatorname{Ext}^t_R(M, N)$ and
 $\operatorname{H}^t_{\mathfrak a}(M)$   in terms of  minimal primary decomposition of the zero submodule of $M$ which are independent of the choice of minimal  primary decomposition. Then, by using those bounds,  we compute the annihilators of local cohomology and Ext modules in certain cases.
 \end{abstract}
\maketitle
\setcounter{section}{-1}
\section{\bf Introduction}
Throughout the paper, $R$ is a commutative Noetherian ring with nonzero identity.
 The $i$-th local cohomology of an $R$-module $M$ with respect to an ideal $\fa$ was defined by Grothendieck as follows:
 $$\h i{\fa}M={\underset{n}{\varinjlim}\operatorname{Ext}^i_R\left(R/\fa^n,  M\right)};$$
see \cite{bs, bh, h} for   more details.

 In this section, we assume   $M$ is a  non-zero finitely generated $R$-module, $N$ is a  Gorenstein $R$-module,  $0=M_1\cap\hdots\cap M_n$ is a minimal primary decomposition of the zero submodule of $M$ with $\ass_R(M/M_i)=\{\fp_i\}$ for all $1\leq i\leq n$ and $\fa$ is an ideal of $R$. We refer the reader to \cite[Sec. 6]{mat} for basic properties of primary decomposition of modules and to \cite{s, s2} for more details about the Gorenstein modules (see also the paragraph before Lemma \ref{gor}).

  We denote, for an $R$-module $M$, $\sup\{i\in\N_0: \h i{\fa}M\neq 0\}$ by $\cd {\fa}M$. Assume $d=\dim_R(M)<\infty$. By Grothendieck's Vanishing  Theorem, $\cd{\fa}M\leq d$. When $\cd{\fa}M=d$, then we have
\begin{equation}\label{eq1}
  \ann_R\left(\h d{\fa}M\right)=\ann_R\left(M/\bigcap_{\cd{\fa}{R/\fp_i}=d}M_i\right).
\end{equation}
This equality is proved by   Lynch \cite[Theorem 2.4]{l}  whenever $R$ is a complete local ring and $M=R$.
  In \cite[Theorem 2.6]{bag}  Bahmanpour {\it et al.}  proved that $\ann_R\left(\h d{\fa}M\right)=\ann_R(M/T_R(\fa, M))$ whenever $\fa=\fm$ and $R$ is a complete local ring, where $T_R(\fa, M)$ denotes the largest submodule $N$ of $M$ such that $\cd \fa N<\cd \fa M$. Then   Bahmanpour
   \cite[Theorem 3.2]{ba} extended  the result of Lynch for the $R$-module $M$. Next, Atazadeh {\it et al.}  \cite{asn1} proved this equality whenever $R$ is a local ring (not necessarily complete) and finally  in \cite{asn2} they extended  it to the non-local case.
   (Note that $T_R(\fa, M)=\bigcap_{\cd{\fa}{R/\fp_i}=\cd {\fa}M}M_i$ \cite[Remark 2.5]{asn1}, also, if $(R, \fm)$ is a complete local ring and $\fp\in\ass_R(M)$, then, by the Lichtenbaum-Hartshorne Vanishing Theorem,   $\cd {\fa}{R/\fp}=d$ if and only if  $\dim_R(R/\fp)=d$ and $\sqrt{\fa+\fp}=\fm$).

In the first section (see Theorem \ref{annh1} and Remark \ref{rem1}), for an arbitrary integer $t$, we give a bound for the annihilator of $\ext tRMN$ in terms of minimal primary decomposition of the zero submodule of $M$. More precisely,   we show that
\begin{equation}\label{eq2}
\ann_R\left(M/\bigcap_{\fp_i\in\Delta(t)}M_i\right)\subseteq \ann_R\left(\ext tRMN\right)\subseteq\ann_R\left(M/\bigcap_{\fp_i\in\Sigma(t)}M_i\right)
\end{equation}
where $\Delta(t)=\{\fp\in\ass_R(M)\cap\supp_R(N): \Ht R\fp\leq t\}$, $\Sigma(t)=\{\fp\in\mass_R(M)\cap\supp_R(N): \Ht R\fp=t\}$ and $\mass_R(M)$ denotes the set of minimal elements of $\ass_R(M)$.
If $t=\grad {\ann_R(M)}N<\infty$, then the above index sets are equal and we can compute the annihilator of $\ext tRMN$. Note that, in general, for an arbitrary integer $t$, there is not a subset $\Sigma$ of $\ass_R(M)$ such that $ \ann_R\left(\ext tRMN\right)=\ann_R\left(M/\bigcap_{\fp_i\in\Sigma}M_i\right)$; see Example \ref{exa1}.

In the second section, we consider the annihilators of local cohomology modules. By using the above bound on the annihilators of Ext modules, when $(R, \fm)$ is a local ring, we show, in Theorem \ref{annh}, that \begin{equation}\label{eq3}
\ann_R\left(M/\bigcap_{\fp_i\in\Delta'(t)}M_i\right)\subseteq \ann_R\left(\h t{\fm}M\right)\subseteq\ann_R\left(M/\bigcap_{\fp_i\in\Sigma'(t)}M_i\right),
\end{equation}
where $\Delta'(t)=\{\fp\in\ass_R(M): \dim_R(R/\fp)\geq t\}$  and $\Sigma'(t)=\{\fp\in\mass_R(M): \dim_R(R/\fp)=t\}$. Next, whenever $R$ is not necessarily local, in Theorem \ref{annh2}, we give a bound for  the  annihilator of the top local cohomology module $\h {\cd{\fa}M}{\fa}M$   which implies  equality (\ref{eq1}) when $d=\cd{\fa}M$. Finally, for each $t$, in Theorem \ref{annh3}, we provide a bound for the annihilator of $\h t{\fa}M$ when $M$ is Cohen-Macaulay, and also we compute its annihilator at $t=\grad{\fa}M$. All the given bounds are independent of the choice of minimal primary decomposition. We adopt the convention that the intersection of empty family of submodules of an $R$-module $M$ is $M$.

   \section{\bf Bounds for the annihilators of  Ext-modules}
Assume  $M, N$ are  finitely generated $R$-modules such that $N$ is a Gorenstein module, and $0=M_1\cap\hdots\cap M_n$ is a minimal primary decomposition of the zero submodule of $M$ with $\ass_R(M/M_i)=\{\fp_i\}$ for all $1\leq i\leq n$. We refer  the reader to
 \cite[Sec. 6]{mat} for basic properties and unexplained terminologies  about the primary decomposition of modules and to \cite{s,s2} for more details about  the Gorenstein modules. In this section  [Theorem \ref{annh1}], for each integer $t$, we give a  bound for the annihilator of $\ext tRMN$ in terms of minimal primary decomposition of the zero submodule of $M$ which   is independent of the choice of minimal primary decomposition. As an application, in  the case where $t=\grad {\ann_R(M)}N$, we compute the  annihilator of $\ext t RMN$. More precisely, for $t=\grad {\ann_R(M)}N$, we have
  $$\ann_R\left(\ext t RMN\right)= \ann_R\left(M/\bigcap_{\fp_i\in\Sigma(t)} M_i\right)$$
  where $\Sigma(t) =\{\fp\in\mass_R(M)\cap\supp_R(N): \Ht R\fp=t\}$; see Theorem \ref{annh1} and Remark \ref{rem1}.
  Note that, in general, for an arbitrary integer $t$, there is not a subset $\Sigma$ of $\ass_R(M)$ such that $\ann_R\left(\ext t RMN\right)= \ann_R\left(M/\bigcap_{\fp_i\in\Sigma} M_i\right)$; see Example \ref{exa1}. These results will be used in the second section to compute the annihilators of local cohomology modules.

Before proving these results, we need some lemmas.

\begin{lem}[{\cite[Theorem 6.8]{mat}}]\label{ass} Let $M$ be a non-zero finitely generated $R$-module. Let
 $\ass_R(M)=\{\fp_1,\dots,\fp_n\}$, and $0=M_1\cap\hdots\cap M_n$ be a minimal primary decomposition of the zero submodule of $M$ with $\ass_R(M/M_i)=\{\fp_i\}$ for all $1\leq i\leq n$. Assume  $\Phi$ is a subset of $\ass_R(M)$ and $N=\bigcap_{\fp_i\in\Phi}M_i$.  Then $$\ass_R(M/N)=\Phi \textrm{, and } \ass_R(N)=\ass_R(M)\setminus\Phi.$$
\end{lem}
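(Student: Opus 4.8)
The plan is to reduce both equalities to a single structural observation: any sub-collection of a minimal primary decomposition, intersected in the appropriate module, is again a minimal primary decomposition, after which the two assertions follow from the standard uniqueness theorem that the set of associated primes of a module equals the set of primes occurring in any minimal primary decomposition of its zero submodule (see \cite[Sec.~6]{mat}). Throughout I use minimality of $0=M_1\cap\dots\cap M_n$ in its two usual forms: the $\fp_i$ are pairwise distinct, and no term may be omitted, i.e.\ $\bigcap_{i\ne j}M_i\not\subseteq M_j$ for every $j$. The boundary cases $\Phi=\emptyset$ (so $N=M$) and $\Phi=\ass_R(M)$ (so $N=0$) are immediate from the empty-intersection convention, so I may assume $\emptyset\subsetneq\Phi\subsetneq\ass_R(M)$.

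For $\ass_R(M/N)=\Phi$, I would first check that $N=\bigcap_{\fp_i\in\Phi}M_i$ is already a minimal primary decomposition of the submodule $N$ of $M$: each $M_i$ appearing is $\fp_i$-primary in $M$ and the relevant $\fp_i$ are distinct, while for irredundancy, if $\fp_j\in\Phi$ then $\bigcap_{\fp_i\in\Phi,\,i\ne j}M_i\supseteq\bigcap_{i\ne j}M_i\not\subseteq M_j$, so $M_j$ cannot be dropped. Applying the uniqueness theorem to this decomposition gives $\ass_R(M/N)=\{\fp_i:\fp_i\in\Phi\}=\Phi$. For $\ass_R(N)=\ass_R(M)\setminus\Phi$, fix $j$ with $\fp_j\notin\Phi$. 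Since $N=\bigcap_{\fp_i\in\Phi}M_i\supseteq\bigcap_{i\ne j}M_i\not\subseteq M_j$, we get $N\not\subseteq M_j$, so $N/(N\cap M_j)\cong(N+M_j)/M_j$ is a nonzero submodule of the $\fp_j$-coprimary module $M/M_j$; hence $\ass_R(N/(N\cap M_j))=\{\fp_j\}$ and $N\cap M_j$ is $\fp_j$-primary in $N$. Then $0=N\cap\bigcap_{\fp_j\notin\Phi}M_j=\bigcap_{\fp_j\notin\Phi}(N\cap M_j)$ is a primary decomposition of $0$ in $N$ with distinct primes, and it is irredundant because for $k$ with $\fp_k\notin\Phi$ one computes $\bigcap_{\fp_j\notin\Phi,\,j\ne k}(N\cap M_j)=\bigcap_{\fp_i\in\Phi}M_i\cap\bigcap_{\fp_j\notin\Phi,\,j\ne k}M_j=\bigcap_{i\ne k}M_i\not\subseteq M_k$. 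By the uniqueness theorem, $\ass_R(N)=\{\fp_j:\fp_j\notin\Phi\}=\ass_R(M)\setminus\Phi$.

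The only genuine point is verifying that minimality (specifically irredundancy) is inherited by these sub-decompositions, which is precisely the two containment computations above together with the elementary fact that a nonzero submodule of a coprimary module is coprimary with the same prime; everything else is bookkeeping with the empty-intersection convention. I expect no obstacle beyond keeping the index sets straight, consistent with the lemma being labeled elementary, and in particular no hypothesis beyond Noetherianity and finite generation of $M$ is used.
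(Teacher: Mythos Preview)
Your argument is correct and cleanly executed; the paper itself omits the proof entirely, stating only that it is elementary. Your reduction to the uniqueness theorem via inherited minimality of the two sub-decompositions is exactly the kind of elementary verification the paper has in mind, so there is nothing further to compare.
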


  Assume  $N$ is a  submodule of  an $R$-module $M$. For any multiplicatively  closed subset $S$ of $R$,  we denote  the contraction of $S^{-1}N$ under the canonical map $M\rightarrow S^{-1}M$ by $S_M(N)$. Assume $\Sigma\subseteq\ass_R(M)$. We say that $\Sigma$ is an isolated subset of $\ass_R(M)$ if it satisfies the following condition: if $\fq\in \ass_R(M)$ and $\fq\subseteq\fp$ for some $\fp\in\Sigma$, then $\fq\in\Sigma$.

The following lemma is  well-known, but  we  prove it  for the readers' convenience.
\begin{lem}[See {\cite[Theorem 4.10 and Exercise 4.23]{at}}]\label{primary} Let $M$ be a finitely generated $R$-module, and  $N$  a proper submodule of $M$. Let   $N=\bigcap_{i=1}^nN_i$  be a minimal  primary decomposition of $N$ in $M$ with $\ass_R(M/N_i)=\fp_i$ for all $1\leq i\leq n$. Assume $\Sigma$ is an  isolated subset of $\ass_R(M/N)$. Then
$$\bigcap_{\fp_i\in\Sigma}N_i=S_M(N),$$
where  $S=R\setminus \bigcup_{\fp\in\Sigma}\fp$.
 In particular, $\bigcap_{\fp_i\in\Sigma}N_i$ is independent of the choice of minimal primary decomposition of $N$ in $M$.
\end{lem}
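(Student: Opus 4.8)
The plan is to establish the identification $\bigcap_{\fp_i\in\Sigma}N_i=S_M(N)$ by a standard localization argument, and then deduce the independence statement as an immediate corollary, since $S_M(N)$ is manifestly defined without reference to any decomposition. First I would recall the behaviour of minimal primary decompositions under localization: for the multiplicatively closed set $S=R\setminus\bigcup_{\fp\in\Sigma}\fp$, a component $N_i$ satisfies $S^{-1}N_i=S^{-1}M$ precisely when $\fp_i\cap S\neq\varnothing$, i.e. when $\fp_i\not\subseteq\fp$ for every $\fp\in\Sigma$, and otherwise $S^{-1}N_i$ is a $S^{-1}\fp_i$-primary submodule of $S^{-1}M$. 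Because $\Sigma$ is isolated, the condition $\fp_i\subseteq\fp$ for some $\fp\in\Sigma$ holds if and only if $\fp_i\in\Sigma$; hence, after localizing and intersecting, $S^{-1}N=\bigcap_{i=1}^n S^{-1}N_i=\bigcap_{\fp_i\in\Sigma}S^{-1}N_i$, and this is a minimal primary decomposition of $S^{-1}N$ in $S^{-1}M$.

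Next I would contract back along $M\to S^{-1}M$. Contraction commutes with finite intersections, so $S_M(N)=\bigcap_{\fp_i\in\Sigma}S_M(N_i)$. It therefore remains to check that $S_M(N_i)=N_i$ for each $i$ with $\fp_i\in\Sigma$. This is the key local point: for a $\fp_i$-primary submodule $N_i$ of $M$ with $\fp_i\cap S=\varnothing$, the natural map $M/N_i\to S^{-1}(M/N_i)$ is injective, because every element of $S$ acts injectively on $M/N_i$ — indeed, a zerodivisor on $M/N_i$ lies in $\bigcup\ass_R(M/N_i)=\fp_i$, which misses $S$. Injectivity of $M/N_i\to S^{-1}(M/N_i)$ is exactly the statement $S_M(N_i)=N_i$. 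Combining the last two displayed equalities gives $S_M(N)=\bigcap_{\fp_i\in\Sigma}N_i$, as desired.

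Finally, the ``in particular'' clause is free: the set $S=R\setminus\bigcup_{\fp\in\Sigma}\fp$ depends only on $\Sigma$, and $S_M(N)$ — the contraction of $S^{-1}N$ — depends only on $N$ and $S$, not on the chosen decomposition; so $\bigcap_{\fp_i\in\Sigma}N_i$ is the same for every minimal primary decomposition of $N$ in $M$. The main obstacle, such as it is, is purely bookkeeping: one must be careful that the isolatedness hypothesis is used in exactly the right place (to ensure that the indices $i$ surviving localization are precisely those with $\fp_i\in\Sigma$, with no spurious extra components and none missing), and that the primary decomposition one obtains after localizing is still \emph{minimal}, so that Lemma \ref{ass} and the uniqueness of isolated components apply cleanly. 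No deep input is needed beyond the cited Atiyah--Macdonald results and the elementary fact that elements outside the associated primes act injectively.
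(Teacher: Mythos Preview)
Your proposal is correct and follows essentially the same route as the paper: localize at $S$, observe that the components with $\fp_i\notin\Sigma$ disappear (the paper does this collectively via $S^{-1}\bigl(\bigcap_{\fp_i\notin\Sigma}N_i\bigr)=S^{-1}M$, you do it component-by-component), contract back using that contraction commutes with finite intersections, and then verify $S_M(N_i)=N_i$ for $\fp_i\in\Sigma$ using that elements of $S$ are non-zerodivisors on $M/N_i$. The only cosmetic difference is that the paper invokes the Prime Avoidance Theorem explicitly where you fold it into the equivalence ``$\fp_i\cap S\neq\varnothing$ iff $\fp_i\not\subseteq\fp$ for every $\fp\in\Sigma$''; and your closing remark about needing the localized decomposition to be \emph{minimal} is not actually required for the argument---the equality $S^{-1}N=\bigcap_{\fp_i\in\Sigma}S^{-1}N_i$ as submodules is all that is used.
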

\begin{proof}
Assume $\Sigma\subseteq\ass_R(M/N)$ is an isolated subset of  $\ass_R(M/N)$ and $S=R\setminus\bigcup_{\fp\in\Sigma}\fp$.
 If $S^{-1}\left({M}/{\bigcap_{\fp_i\in\ass_R(M/N)\setminus\Sigma }N_i}\right)\neq 0$, then there exists
 $$\fq\in \ass_R\left({M}/{\bigcap_{\fp_i\in\ass_R(M/N)\setminus\Sigma }N_i}\right)=\ass_R(M/N)\setminus\Sigma$$
  such that $\fq\cap S=\emptyset$. Since $\fq\cap S=\emptyset$, by the Prime Avoidance Theorem,  $\fq\subseteq\fp$ for some $\fp\in\Sigma$. But $\Sigma$ is an isolated subset of $\ass_R(M/N)$ and  so $\fq\in \Sigma$,
which is a contradiction. Hence
  $S^{-1}\left(\bigcap_{\fp_i\in\ass_R(M/N)\setminus\Sigma }N_i\right)=S^{-1}M$. It follows that  $S^{-1}N=\bigcap_{\fp_i\in\Sigma}S^{-1}N_i$. Contracting both sides under the canonical map $M\rightarrow S^{-1}M$ we obtain
  $(S^{-1}N)^c=\bigcap_{\fp_i\in\Sigma}(S^{-1}N_i)^c$. Now, assume $\fp_i\in\Sigma$. It is clear that $N_i\subseteq(S^{-1}N_i)^c$. Conversely, if $m\in (S^{-1}N_i)^c$, then $m/1\sim n/s$ for some $n\in N_i$ and  $s\in S $. Hence $tsm=tn\in N_i$ for some $t\in S$. Since $N_i$ is a $\fp_i$-primary submodule of $M$ and $ts\notin\fp_i$,  we have $m\in N_i$. Therefore  $N_i=(S^{-1}N_i)^c$, and hence $(S^{-1}N)^c=\bigcap_{\fp_i\in\Sigma}N_i$. This completes the proof.
\end{proof}

\begin{rem}\label{rem} Let the situation and notations be as in above lemma. Assume, in addition, that $\Sigma=\emptyset$,  and we  consider the above lemma in this special case separately.  It is clear that $\Sigma$ is an isolated subset of $\ass_R(M/N)$ and $\bigcap_{\fp_i\in\Sigma}N_i=M$ because the intersection of the empty family of subsets of a  set $M$ is $M$. On the other hand, we have
$S=R\setminus\bigcup_{\fp\in\Sigma}\fp=R$. Since $0\in S$, we obtain $S^{-1}(N)=S^{-1}(M)=0$, and so the contraction of  $S^{-1}(N)$ under the map $M\rightarrow S^{-1}(M)$ is $M$. Therefore  we have $S_M(N)=M=\bigcap_{\fp_i\in\Sigma}N_i$ in this case.
\end{rem}

Let $(R, \fm)$ be a local ring. A non-zero finitely generated $R$-module $G$ is said to be Gorenstein if  $$\depth_R(G)=\dim_R(G)=\id_R(G)=\depth_R(R)=\dim_R(R)$$
 (so $R$ is Cohen-Macaulay)  or equivalently  $\ext iR{R/\fm}G$ is non-zero only at $i=\dim_R(G)$; see \cite[Theorem 3.11]{s}.  More generally, if $R$ is not necessarily local,  a non-zero finitely generated $R$-module $G$ is said to be  Gorenstein if  $G_\fp$ is a Gorenstein $R_\fp$-module for all $\fp\in\supp_R(G)$; see \cite[corollary 3.7]{s}.
 When $(R, \fm)$ is a complete local ring, then  Gorenstein modules under  isomorphism are the  non-empty finite direct sums of the canonical module  \cite[Corollary 2.7]{s2}.

 The following property  of Gorenstein modules is needed in the proof the main theorem of this section.
\begin{lem}\label{gor} Let $G$ be a Gorenstein $R$-module, and  $\fp$ a  prime ideal of $R$. Then
 $\fp\in\supp_R(G)$ if and only if $G\neq\fp G$.
\end{lem}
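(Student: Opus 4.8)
The plan is to prove the two implications of the equivalence separately. The implication ``$\fp\in\supp_R(G)\Rightarrow G\neq\fp G$'' uses nothing about $G$ beyond finite generation: if $\fp\in\supp_R(G)$ then $G_\fp\neq0$, and since $G_\fp$ is a finitely generated module over the local ring $R_\fp$ with maximal ideal $\fp R_\fp$, Nakayama's Lemma gives $\fp R_\fp\,G_\fp\neq G_\fp$. Because $\fp R_\fp\,G_\fp=(\fp G)_\fp$, this yields $(G/\fp G)_\fp\neq0$, hence $G/\fp G\neq0$, i.e.\ $G\neq\fp G$.

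For the converse, suppose $G\neq\fp G$. Then $G/\fp G\neq0$, so $\emptyset\neq\supp_R(G/\fp G)=\supp_R(G)\cap V(\fp)$, and I may pick $\fq\in\supp_R(G)$ with $\fp\subseteq\fq$. By the definition of a Gorenstein module in the non-local setting, $G_\fq$ is a nonzero Gorenstein $R_\fq$-module. The crux is then the claim that $\supp_{R_\fq}(G_\fq)=\spec R_\fq$; granting this, $\fp R_\fq$ lies in $\supp_{R_\fq}(G_\fq)$, so $G_\fp=(G_\fq)_{\fp R_\fq}\neq0$ and therefore $\fp\in\supp_R(G)$, as wanted. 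To prove the claim it suffices, since $G_\fq$ is finitely generated, to show that a nonzero Gorenstein module $H$ over a local ring $(A,\fn)$ is faithful. I would establish this by base change to the completion: $\widehat H=H\otimes_A\widehat A$ is again a Gorenstein module over $\widehat A$ (depth and injective dimension being unchanged under completion), hence a nonzero finite direct sum of copies of the canonical module of $\widehat A$ by \cite[Corollary 2.7]{s2}; the canonical module is faithful, so $\ann_{\widehat A}(\widehat H)=0$. As $\ann_{\widehat A}(\widehat H)=(\ann_A H)\widehat A$ and $A\to\widehat A$ is faithfully flat, this forces $\ann_A H=0$.

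I expect the only genuine obstacle to be precisely this faithfulness statement for Gorenstein modules over a local ring — equivalently, that such a module has support equal to the whole spectrum, which is what makes the inclusion $\supp_R(G)\cap V(\fp)\neq\emptyset\Rightarrow\fp\in\supp_R(G)$ work and is false for an arbitrary finitely generated (or even finite injective dimension) module. It may alternatively be quoted directly from Sharp's work \cite{s,s2}. Everything else in the argument is formal manipulation with supports and localization.
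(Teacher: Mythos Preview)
Your proof is correct and follows essentially the same approach as the paper: both directions are argued identically, with the forward implication via Nakayama's Lemma and the converse by finding $\fq\in\supp_R(G)$ with $\fp\subseteq\fq$ and then invoking that $\supp_R(G)$ is closed under generalization. The only difference is that the paper cites this last fact directly as \cite[Corollary~4.14]{s}, whereas you supply a self-contained argument for it via completion and the structure theorem for Gorenstein modules over complete local rings---an option you yourself note.
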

\begin{proof}
Assume $\fp\in\supp_R(G)$. Hence $G_\fp\neq 0$ and consequently $G_\fp\neq\fp R_{\fp}G_\fp$ by  Nakayama's Lemma. It follows that $G\neq\fp G$.
Conversely, assume $G\neq\fp G$. Thus there exists  $\fq\in\supp_R(G)$ such that $G_\fq\neq \fp R_q G_\fq$. Therefore $\fp \subseteq\fq$, and hence \cite[Corollary 4.14]{s} implies that  $\fp\in\supp_R(G)$.
\end{proof}
Now we are ready to state and prove the main theorem of this section which provides   bound for the annihilators of Ext modules. Local version of this theorem  Remark \ref{rem1}(1.1) will be used to compute the annihilators of local cohomology modules in the next section.
  \begin{thm}\label{annh1} Let  $M, N$ be non-zero finitely generated $R$-modules,
  and let $0=M_1\cap\hdots\cap M_n$ be  a minimal primary decomposition of the zero submodule of $M$ with $\ass_R(M/M_i)=\{\fp_i\}$ for all $1\leq i\leq n$. Let $t\in\N_0$ and set $\Delta(t)=\{ \fp\in\ass_R(M): \grad{\fp}N\leq t\}$, $\Sigma(t)=\{ \fp\in\mass_R(M): \grad{\fp}N=t\}$, $S^t=R\setminus\bigcup_{\fp\in\Delta(t)}\fp$, and $T^t=R\setminus\bigcup_{\fp\in\Sigma(t)}\fp$.
Then
\begin{enumerate}[\rm(i)]
\item  $\bigcap_{\fp_i\in\Delta(t)} M_i=S^t_M(0)$ and $\bigcap_{\fp_i\in\Sigma(t)} M_i=T^t_M(0)$. In particular, $\bigcap_{\fp_i\in\Delta(t)} M_i$ and $\bigcap_{\fp_i\in\Sigma(t)} M_i$ are independent of the choice of minimal primary decomposition of the zero submodule of $M$.
    \item $S^t_M(0)$ is the largest submodule $L$ of $M$ such that $\ext iRLN=0$ for all $i\leq t$.
 \item  $$\ann_R({M}/{S^t_M(0)})\subseteq\ann_R\left(\ext t RMN\right).$$
  If, in addition,    $N$ is a Gorenstein module, then
  $$\ann_R\left(\ext t RMN\right)\subseteq \ann_R({M}/{T^t_M(0)}).$$
  \item If $N$ is a Gorenstein module such that $\supp_R(M)\cap\supp_R(N)\neq\emptyset$ and  $t=\grad{\ann_R(M)}N$, then $\Delta(t)=\Sigma(t)$ and
  $$\ann_R\left(\ext t RMN\right)= \ann_R({M}/{T^t_M(0)}).$$
\end{enumerate}
\end{thm}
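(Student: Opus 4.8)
The plan is to prove (i) and (ii) for an arbitrary finitely generated $N$, to deduce the first inclusion of (iii) formally from (ii), and only then to bring in the Gorenstein hypothesis. For (i) and (ii) I would use two standard facts: grade is monotone in its ideal argument, $\fq\subseteq\fp\Rightarrow\grad{\fq}N\le\grad{\fp}N$ (immediate from $\grad{\fp}N=\inf\{\depth_{R_\fr}N_\fr:\fr\supseteq\fp\}$); and, for any finitely generated submodule $L\subseteq M$, $\inf\{i\in\N_0:\ext iRLN\neq0\}=\min\{\grad{\fp}N:\fp\in\ass_R(L)\}$ (the grade of the pair $(L,N)$; see, e.g., \cite[\S1.2]{bh}), with $\min\emptyset:=\infty$. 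Monotonicity makes $\Delta$ an isolated subset of $\ass_R(M)$, while $\Sigma$ is isolated since $\Sigma\subseteq\mass_R(M)$; hence (i), including the asserted independence, is exactly Lemma~\ref{primary} applied to $0=\bigcap_{i=1}^nM_i$ with the isolated subsets $\Delta$ and $\Sigma$. For (ii), the displayed identity shows that a submodule $L\subseteq M$ satisfies $\ext iRLN=0$ for all $i\le t$ if and only if $\ass_R(L)\cap\Delta=\emptyset$; since $\ass_R(S^t_M(0))=\ass_R(M)\setminus\Delta$ by Lemma~\ref{ass}, $S^t_M(0)$ qualifies, and if $L$ qualifies, its image $\overline L$ in $M/S^t_M(0)$ has $\ass_R(\overline L)\subseteq\ass_R(M/S^t_M(0))=\Delta$, while every $\fp\in\ass_R(\overline L)$ lies in $\supp_R(L)$ (as $\overline L$ is a quotient of $L$) and so contains some $\fq\in\mass_R(L)\subseteq\ass_R(L)$ — isolatedness of $\Delta$ would then force $\fq\in\ass_R(L)\cap\Delta$, a contradiction unless $\overline L=0$; thus $L\subseteq S^t_M(0)$. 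Finally, applying $\Hom{R}{-}{N}$ to $0\to S^t_M(0)\to M\to M/S^t_M(0)\to0$ and using $\ext{t-1}R{S^t_M(0)}N=\ext tR{S^t_M(0)}N=0$ gives $\ext tR{M/S^t_M(0)}N\cong\ext tRMN$, whence $\ann_R(M/S^t_M(0))\subseteq\ann_R(\ext tRMN)$.

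Now suppose $N$ is Gorenstein. The decisive point is that $\depth_{R_\fq}N_\fq=\dim R_\fq=\Ht R\fq$ for every $\fq\in\supp_R(N)$ (paragraph before Lemma~\ref{gor}), and that $\supp_R(N)=V(\ann_RN)$, being closed and — by \cite[Corollary~4.14]{s}, as used in Lemma~\ref{gor} — stable under generization, is a union of connected components of $\spec R$; hence $\grad{\fp}N=\Ht R\fp$ if $\fp\in\supp_R(N)$ and $\grad{\fp}N=\infty$ otherwise, so $\Sigma\subseteq\mass_R(M)\cap\supp_R(N)$ with $\Ht R\fp=t$ for every $\fp\in\Sigma$. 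For the second inclusion of (iii): since $M/T^t_M(0)$ embeds in $\bigoplus_{\fp_i\in\Sigma}M/M_i$, it is enough to show $\ann_R(\ext tRMN)\cdot M\subseteq M_i$ for each $\fp_i\in\Sigma$. With $\fp=\fp_i$: as $\fp\in\mass_R(M)$ one has $(M_i)_\fp=0$, so $(M/M_i)_\fp=M_\fp$, and $M/M_i$ embeds in $M_\fp$ (because $\ass_R(M/M_i)=\{\fp\}$); so the claim reduces to $\ann_{R_\fp}(\ext t{R_\fp}{M_\fp}{N_\fp})\subseteq\ann_{R_\fp}(M_\fp)$. Here $A:=R_\fp$ is Cohen--Macaulay with $\dim A=t$, $G:=N_\fp$ is Gorenstein with $\id_AG=t$, and $X:=M_\fp\neq0$ has finite length over $A$ (its only associated prime is $\fm A$); so the entire matter reduces to the local statement: \emph{if $(A,\fm)$ is local, $G$ a Gorenstein $A$-module with $d=\dim A=\id_AG$, and $X\neq0$ of finite length, then $\ann_A(\ext dAXG)=\ann_AX$.}

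I would prove this local statement by reducing, via the right exactness of $\ext dA{-}G$ on finite-length modules (valid since $\id_AG=d$), to the case $X=A/\fa$ with $\fa$ an $\fm$-primary ideal; then, choosing a system of parameters $\underline{y}\subseteq\fa$ — which is $G$-regular because $G$ is maximal Cohen--Macaulay — one has $\ext jA{A/\underline{y}A}G=0$ for $j\neq d$ and $=G/\underline{y}G$ for $j=d$, so a dimension shift along $\underline{y}$ gives $\ext dA{A/\fa}G\cong\Hom{A/\underline{y}A}{A/\fa}{G/\underline{y}G}$; finally $G/\underline{y}G$ is a nonzero injective module over the Artinian (hence complete) local ring $A/\underline{y}A$, so a finite nonzero direct sum of copies of the injective hull of its residue field, and Matlis duality over $A/\underline{y}A$ then yields $\ann_A(\ext dA{A/\fa}G)=\fa=\ann_A(A/\fa)$. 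This sub-claim is the step I expect to be the main obstacle: for $G$ the canonical module of a complete local ring it is just local duality, but for a general Gorenstein module over a not-necessarily-complete local ring one must descend through a $G$-regular system of parameters to the Artinian case, and this is exactly where the Gorenstein property — through maximal Cohen--Macaulayness of $G$, rather than merely $\id_AG<\infty$ — is essential. Granting it, (iii) is complete.

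It remains to treat (iv). Assume $\supp_R(M)\cap\supp_R(N)\neq\emptyset$ and $t=\grad{\ann_RM}N$. From the grade formula, $t=\inf\{\Ht R\fq:\fq\in\supp_R(M)\cap\supp_R(N)\}$ is a nonnegative integer, and — using that $\supp_R(N)$ is stable under generization — the infimum is attained at a minimal prime of $M$, so $t=\min\{\Ht R\fp:\fp\in\mass_R(M)\cap\supp_R(N)\}$, while $\Delta=\{\fp\in\ass_R(M)\cap\supp_R(N):\Ht R\fp\le t\}$. Given $\fp\in\Delta$, choose $\fp'\in\mass_R(M)$ with $\fp'\subseteq\fp$; then $\fp'\in\supp_R(N)$ by generization-stability, so $\Ht R\fp'\ge t$, whereas $\Ht R\fp'\le\Ht R\fp\le t$; thus $\Ht R\fp'=\Ht R\fp=t$, which forces $\fp=\fp'\in\Sigma$. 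Hence $\Delta=\Sigma$ ($\Sigma\subseteq\Delta$ being clear), so by (i) $S^t_M(0)=\bigcap_{\fp_i\in\Delta}M_i=\bigcap_{\fp_i\in\Sigma}M_i=T^t_M(0)$, and the two inclusions of (iii) collapse to the equality $\ann_R(\ext tRMN)=\ann_R(M/T^t_M(0))$.
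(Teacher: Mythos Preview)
Your proof is correct, and for parts (i), (ii), the first inclusion of (iii), and (iv) it follows essentially the same lines as the paper. The one step in (iv) you pass over quickly---that $\fp'\subseteq\fp$ with $\Ht R{\fp'}=\Ht R\fp$ forces $\fp'=\fp$---does require the Cohen--Macaulayness of $R_\fp$ (which holds since $N_\fp$ is a Gorenstein $R_\fp$-module); the paper makes this explicit via the dimension formula $\dim_{R_\fp}N_\fp=\grad{\fp}N$ and $\dim_{R_\fp}(N_\fp/(\fq R_\fp)N_\fp)=\dim_{R_\fp}N_\fp-\dim_{(R_\fp)_{\fq R_\fp}}(N_\fp)_{\fq R_\fp}$ from \cite[Theorem~4.12]{s}.

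The genuine divergence is in the local computation at the heart of the second inclusion of (iii), namely that for $(A,\fm)$ local with $G$ Gorenstein of dimension $d=\id_AG$ and $X$ of finite length, $\ann_A\ext dAXG=\ann_AX$. The paper passes to the completion $\widehat A$, invokes Sharp's structure theorem \cite[Corollary~2.7]{s2} to write $\widehat G\cong\omega_{\widehat A}^{\alpha}$, and then applies Grothendieck local duality together with $\gam{\fm}X=X$. Your route is more self-contained: you reduce to $X=A/\fa$ via right exactness of $\ext dA{-}G$, descend along a system of parameters $\underline{y}\subseteq\fa$ (which is both $A$- and $G$-regular since $A$ and $G$ are Cohen--Macaulay of dimension $d$) using the degenerate change-of-rings spectral sequence to get $\ext dA{A/\fa}G\cong\Hom{A/\underline{y}A}{A/\fa}{G/\underline{y}G}$, observe that $G/\underline{y}G$ is injective over the Artinian local ring $A/\underline{y}A$ (since $\id$ drops by one under each regular element), hence a finite power of the residue-field injective hull, and finish with Matlis duality. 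What this buys you is that you never leave $A$: no completion, no appeal to the canonical module or to the classification of Gorenstein modules over complete rings, and no Grothendieck local duality in its full form---only the Artinian Matlis duality, which is elementary. The paper's route, by contrast, identifies the computation directly with $\ann_A\gam{\fm A}X$ and so makes the connection to local cohomology (used later in Theorem~\ref{annh}) more transparent.
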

\begin{proof} Set $S=S^t_M(0)$ and $T=T^t_M(0)$.

i) Since $\Delta(t)$ and $\Sigma(t)$ are isolated subsets of $\ass_R(M)$, (i) is an immediate consequence of Lemma \ref{primary}.

ii) By Lemma \ref{ass}, in view of  \cite[Proposition 1.2.10]{bh}, we have
  \begin{align*}
  \grad{\ann_R(S)}N&=\grad{\sqrt{\ann_R(S)}}N=\grad{\bigcap_{\fp\in\ass_R(S)}\fp}N\\
&=\min_{\fp\in\ass_R(S)}\grad{\fp}N=\min_{\fp\in\ass_R(M)\setminus\Delta(t)}\grad{\fp}N>t.
  \end{align*}
  Since $\grad{\ann_R(S)}N>t$, we have  $\ext iR{S}N=0$ for all $i\leq t$ by \cite[Proposition 1.2.10(e)]{bh}. Also, we note that, if $\Delta(t)=\ass_R(M)$, then $S=0$ and $\grad{\ann_R(S)}N=\grad RN=\infty$. Now, assume $L$ is a submodule of $M$ such that  $\ext iRLN=0$ for all $i\leq t$. Suppose, for the sake of contradiction, that $L\nsubseteq S$. Then
$$0\neq L/(L\cap S)\cong (L+S)/S\subseteq M/S.$$
Thus $\emptyset\neq\ass_R(L/(L\cap S))\subseteq \Delta(t)$. Hence, there exists $\fp\in\ass_R(L/(L\cap S))\subseteq V(\ann_R(L))$ such that $\grad {\fp}N\leq t$. But this is impossible,    because, by our assumption,  $\grad{\ann_R(L)}N>t$; see again \cite[Proposition 1.2.10(e)]{bh}. Hence $L\subseteq S$ and the proof of (ii) is completed.

 iii) Since $\ext tRSN=0$, the exact sequence $0\rightarrow S\rightarrow M\rightarrow M/S\rightarrow 0$ induces the epimorphism
$\ext tR{M/S}N\rightarrow\ext tRMN.$
 It follows that $$\ann_R(M/S)\subseteq\ann_R\left(\ext tR{M/S}N\right)\subseteq\ann_R\left(\ext tR{M}N\right)$$
  and hence the first inclusion in (iii) holds. To prove the second inclusion in (iii), assume that $N$ is a Gorenstein module. If $\Sigma(t)=\emptyset$, then $T=M$ by Remark \ref{rem} and there is nothing to prove. Hence,   suppose that $\Sigma(t)\neq\emptyset$,  $\fp_i\in\Sigma(t)$ and $y\in\ann_R\left(\ext tRMN\right)$. Since $\grad{\fp_i}N=t<\infty$, we have  $\fp_iN\neq N$ and so, by Lemma \ref{gor}, $\fp_i\in\supp_R(N)$.  Hence  $N_{\fp_i}$ is a  Gorenstein ${R_{\fp_i}}$-module \cite[Corollary 3.7]{s}. Because $N$ is Cohen-Macaulay,   we have  $\dim_{R_{\fp_i}}( N_{\fp_i})=\grad {\fp_i}N=t$ and so, by \cite[Theorem 4.12]{s}, we have  $\dim_{R_{\fp_i}}(R_{\fp_i})=\dim_{R_{\fp_i}} (N_{\fp_i})=t$. We proved that  $N_{\fp_i}$ is a Gorenstein $R_{\fp_i}$-module of dimension $t$, and hence, in view of the faithfully flatness of completion,  we  can deduce that  $\widehat{N_{\fp_i}}$ is also a  Gorenstein $\widehat{R_{\fp_i}}$-module of dimension $t$. Hence $\widehat{N_{\fp_i}}\cong\omega^{\alpha}_{\widehat{R_{\fp_i}}}$ for some $\alpha\in\N$ \cite[Corollary 2.7]{s2}, where $\omega_{\widehat{R_{\fp_i}}}$ denotes the canonical module of $\widehat{R_{\fp_i}}$. Since $\widehat{R_{\fp_i}}$ is a Cohen-Macaulay complete local ring of dimension $t$, by the Local Duality Theorem  \cite[Theorem 11.2.8]{bs}  and \cite[Remarks 10.2.2(ii)]{bs}, we have
  \begin{align*}
\ann_{R_{\fp_i}}\left(\ext t{R_{\fp_i}}{ M_{\fp_i}}{N_{\fp_i}}\right)
&=R_{\fp_i}\cap\ann_{\widehat{R_{\fp_i}}}\left(\ext t{\widehat{R_{\fp_i}}}{\widehat{ M_{\fp_i}}}{\widehat{N_{\fp_i}}}\right)\\
&=R_{\fp_i}\cap\ann_{\widehat{R_{\fp_i}}}\left(\ext t{\widehat{R_{\fp_i}}}{\widehat{ M_{\fp_i}}}{\omega_{\widehat{R_{\fp_i}}}}\right)\\
&=R_{\fp_i}\cap\ann_{\widehat {R_{\fp_i}}}\left(\gam {\widehat{\fp_i{R_{\fp_i}}}}{\widehat{M_{\fp_i}}}\right)\\
&=\ann_{R_{\fp_i}}\left(\gam {\fp_i{R_{\fp_i}}}{M_{\fp_i}}\right)=\ann_{R_{\fp_i}}{(M_{\fp_i})}
\end{align*}
(note that since $\fp_i$ is a minimal element of $\ass_R(M)$, it follows that $\dim_{R_{\fp_i}}(M_{\fp_i})=0$ and hence
 $\gam {\fp_i{R_{\fp_i}}}{M_{\fp_i}}={M_{\fp_i}}$).

 Now, if $1\leq j\neq i\leq n $, then $(M/M_j)_{\fp_i}=0$, because  $\ass_R(M/M_j)=\{\fp_j\}$ and $\fp_i$ is a minimal element of $\ass_R(M)$. Thus
 $(M_j)_{\fp_i}= M_{\fp_i}$ for all $1\leq j\neq i\leq n$, and so $\left(\bigcap_{j=1}^nM_j\right)_{\fp_i}\cong(M_i)_{\fp_i}$. Since  $M_{\fp_i}\cong (M/0)_{\fp_i}\cong \left(M/{\bigcap_{j=1}^nM_j}\right)_{\fp_i}\cong(M/M_i)_{\fp_i}$, we have $y/1\in(\ann_R(M/M_i))_{\fp_i}$, and hence $y/1\sim z/s$ for some  $z\in\ann_R(M/M_i)$, $s\in R\setminus\fp_i$. Thus $rsy=rz\in \ann_R(M/M_i)$ for some
  $r\in R\setminus\fp_i$. Hence $rsyM\subseteq M_i$. Since $M_i$ is a $\fp_i$-primary submodule of $M$, it follows from  $rs\notin\fp_i$ that $yM\subseteq M_i$.
  Because $\fp_i$ is an arbitrary element of $\Sigma(t)$, $yM\subseteq \bigcap_{\fp_i\in\Sigma(t)}M_i$, and by part (i), this implies that $yM\subseteq T$.
Thus   $\ann_R\left(\ext tRMN\right)\subseteq\ann_R(M/T)$.

iv) Assume  $N$ is a Gorenstein module such that $\supp_R(M)\cap\supp_R(N)\neq\emptyset$. Thus $N/(\ann_R(M))N\neq 0$,  and so $t=\grad{\ann_R(M)}N<\infty$; see \cite[Definition 1.2.6]{bh}. It is clear that $\Sigma(t)\subseteq\Delta(t)$. To prove the reverse inclusion, let $\fp\in\Delta(t)$. Since $\ann_R(M)\subseteq \fp$, we obtain $\grad{\ann_R(M)}N\leq\grad{\fp}N$ and consequently $\grad{\ann_R(M)}N=\grad{\fp}N$.  Now, let $\fq\in\supp_R(M)$ be such that $\fq\subseteq\fp$.  It follows from $\grad {\fp}N=t<\infty$ that $\fp\in\supp_R(N)$,  and so $\fq\in\supp_R(N)$ by \cite[Corollary 4.14]{s}. Hence, by \cite[Theorem 2.1.3 (b)]{bh} and \cite[Theorem 4.12]{s}, we have
\begin{align*}
t&=\grad{\ann_R(M)}N\leq\grad{\fq}N=\dim_{R_\fq}(N_{\fq})\\
&=\dim_{(R_\fp)_{\fq R_\fp}}(N_\fp)_{\fq R_\fp}=\dim_{R_\fp}(N_{\fp})-\dim_{R_\fp}(N_{\fp}/(\fq R_{\fp})N_\fp)\\
&=\grad{\fp}N-\dim_{R_\fp}(R_{\fp}/\fq R_{\fp})=t-\dim_{R_\fp}(R_{\fp}/\fq R_{\fp})
\end{align*}
Therefore $\dim_{R_\fp}(R_{\fp}/\fq R_{\fp})=0$ or equivalently $\fq=\fp$. Hence $\fp\in\mass_R(M)$ and consequently  $\Delta(t)\subseteq\Sigma(t)$.
\end{proof}
For an integer $t$ and an $R$-module $M$, we denote, respectively, the sets $\{\fp\in\ass_R(M): \dim_R (R/\fp)=t\}$ and $\{\fp\in\ass_R(M): \dim_R (R/\fp)\geq t\}$ by $\ass_R^t(M)$ and $\ass_R^{\geqslant t}(M)$.
 Similarly, $\mass_R^t(M)$ and $\mass_R^{\geqslant t}(M)$ are defined as above by replacing $\ass_R(M)$ by $\mass_R(M)$. Also, when $\dim_R(M)<\infty$, the set of prime ideals in $\ass_R(M)$ of the highest possible dimension  $\{\fp\in\ass_R(M): \dim_R (R/\fp)=\dim_R(M)\}$ is denoted by $\assh_R(M)$.
\begin{rem}\label{rem1} Let the situation and notations be as in above theorem. Let $N$ be a Gorenstein $R$-module, and $\fp$  a prime ideal of $R$. Then $\grad {\fp}N=t<\infty$ if and only if  $N\neq\fp N$ or equivalently $\fp\in\supp_R(N)$. Also, if $\fp\in\supp_R(N)$, then $N_\fp$ is a Gorenstein module on the local ring $R_\fp$ and, in view of \cite[Theorem 4.12]{s}, we have
$$\grad{\fp}N=\dim_{R_\fp}(N_{\fp})=\dim_{R_\fp}(R_{\fp})=\Ht R\fp.$$
  Hence $$\Delta(t)=\{\fp\in\ass_R(M)\cap\supp_R(N): \Ht R\fp\leq t\},$$
   $$\Sigma(t)=\{\fp\in\mass_R(M)\cap\supp_R(N): \Ht R\fp=t\}.$$
 In the remainder of this remark,  assume in addition that $R$ is a local ring of dimension $d$. Then $\Ht R\fp=d-\dim_R(R/\fp)$ and $\supp_R(N)=\spec (R)$. Thus  above theorem states that
  \begin{align}\label{ext1}
\ann_R\left({M}/{\bigcap_{\fp_i\in\ass_R^{\geqslant d-t}(M)}M_i}\right)&\subseteq\ann_R\left(\ext tRMN\right)\\
 \nonumber &\subseteq \ann_R\left({M}/{\bigcap_{\mass_R^{d-t}(M)}M_i}\right).
 \end{align}
  In particular, if $M\neq 0$, then $$\grad{\ann_R(M)}N=\dim_R(N)-\dim_R(N/(\ann_R(M))N)=d-\dim_R(M)$$ and    the equality in Theorem \ref{annh1}(iv) can be rewrite as follows
  \begin{equation}\label{ext2}
    \ann_R\left(\ext {d-\dim_R(M)}RMN\right)=\ann_R\left({M}/{\bigcap_{\fp_i\in\assh_R(M)}M_i}\right).
  \end{equation}
\end{rem}

  These results are needed  in the proof of the main theorem of the  next section (Theorem \ref{annh}) which provides  some bounds for the  annihilators of local cohomology modules.

  We end this section by two examples to show that how we  can compute the above bounds for the annihilators  of Ext modules.  Moreover,  these examples show that to improve the upper bound in (1.1) we can not replace the index set $\mass_R^{d-t}(M)$ by the larger  sets $\mass_R^{\geqslant d-t}(M)$, $\ass_R^{d-t}(M)$ or $\ass_R^{\geqslant d-t}(M)$ and also to improve the lower bound in (1.1) we can not replace the index set $\ass_R^{\geqslant d-t}(M)$ by the smaller set $\ass_R^{d-t}(M)$. Also, in general for an arbitrary integer $t$, there is not  a subset $\Sigma$ of $\ass_R(M)$ such that
$\ann_R\left(\ext tRMN\right)=\ann_R\left(M/(\bigcap_{\fp_i\in\Sigma}M_i)\right)$.

Let $U$ be a subset of an $R$-module $M$. We use $\langle U\rangle$ to denote the submodule of $M$ generated  by $U$. If $U=\{m_1,\dots, m_n\}$, then we show $\langle U\rangle$ by $\langle m_1,\dots, m_n\rangle$.
\begin{exa} \label{exa1} Let $K$ be a field and  let $R=K[[X, Y]]$ be the ring of formal power series over  $K$ in indeterminates  $X, Y$.

 Set $M=R/\langle X^2, XY\rangle$, $M_1=\langle X\rangle/\langle X^2, XY\rangle$, and $M_2=\langle X^2, Y \rangle/\langle X^2, XY \rangle$. Then $0=M_1\cap M_2$ is a minimal primary decomposition of the zero submodule of $M$ with $\ass_R(M/M_1)=\{\fp_1=\langle X\rangle\}$ and $\ass_R(M/M_2)=\{\fp_2=\langle X, Y\rangle\}$. So $\ass_R(M)=\{\fp_1, \fp_2\}$ and $\mass_R(M)=\{\fp_1\}$.
Hence, we have
$$\ass_R^{\geqslant 2-t}(M)=\left\{\begin{array}{ll}
\emptyset&{\rm if }\ t=0,\\
\{\fp_1\}&{\rm if }\ t=1,\\
\{\fp_1,\fp_2\}&{\rm if }\ t=2
\end{array}\right.$$
and
 $$\mass_R^{ 2-t}(M)=\left\{\begin{array}{ll}
\emptyset&{\rm if }\ t=0, 2,\\
\{\fp_1\}&{\rm if }\ t=1.
\end{array}\right.$$
 It follows that
 $$\ann_R\left({M}/{\bigcap_{\fp_i\in\ass_R^{\geqslant 2-t}(M)}M_i}\right)=\left\{\begin{array}{ll}
R&{\rm if }\ t=0,\\
\langle X\rangle&{\rm if }\ t=1,\\
\langle X^2, XY \rangle&{\rm if }\ t=2
\end{array}\right.$$ and
$$ \ann_R\left({M}/{\bigcap_{\fp_i\in\mass_R^{2-t}(M)}M_i}\right)=\left\{\begin{array}{ll}
R&{\rm if }\ t=0, 2,\\
\langle X\rangle&{\rm if }\ t=1.
\end{array}\right.$$
Hence,   Remark \ref{rem1} implies that   $$\Hom RMR=0,\ \ann_R\left(\ext 1RMR\right)=\langle X\rangle$$ and $$\langle X^2, XY\rangle\subseteq\ann_R\left(\ext 2RMR\right)\subseteq R.$$
 Also, since $\id_R(R)=2$, we deduce that $\ext tRMR=0$ for all $t>2$.

Now, we directly compute  $\ann_R\left(\ext tRMR\right)$ for all $t$ (specially for $t=2$). It is straightforward to see that
$$\textbf{P}: 0\longrightarrow R\stackrel{d_2}{\longrightarrow} R^2\stackrel{d_1}{\longrightarrow} R\stackrel{\epsilon}{\longrightarrow} M\longrightarrow 0$$ with $\epsilon(f)=f+\langle X^2, XY\rangle,\, d_1(f, g)=X^2f+XYg,\, d_2(f)=(Yf, -Xf)$ for all $f, g\in R$ is a projective resolution of $M$. Applying the functor $\Hom R{\cdot}R$ to the delated projective resolution $\textbf{P}_M$, we obtain the following commutative diagram
\begin{displaymath} \xymatrix{
0\ar[r]&\Hom RRR\ar[d]^{\alpha}_{\cong}\ar[r]^{d_1^*}&\Hom R{R^2}R\ar[d]^{\beta}_{\cong}\ar[r]^{d_2^*}&\Hom RRR\ar[d]^{\gamma}_{\cong}\ar[r]&0\\
0\ar[r]&R\ar[r]^{\delta_1} &R^2\ar[r]^{\delta_2} &R\ar[r] &0,
} \end{displaymath}
where $\alpha, \beta, \gamma$ are natural isomorphisms, $\delta_1(f)=(X^2f, XYf)$, and $ \delta_2(f,g)=Yf-Xg$ for all $f, g\in R$.   Hence
$$ \ext 1RMR\cong{\ker\delta_2}/{\im\delta_1}=\langle(X, Y)\rangle/\langle(X^2, XY)\rangle\cong R/\langle X\rangle,$$ and
 $$ \ext 2RMR\cong {R}/{\langle X, Y\rangle} \ \text{and} \ \ext tRMR=0 \ \text{for all}\ t\neq 1, 2$$
 (note that by our notation $\ker\delta_2$ and $\im\delta_1$ are cyclic $R$-modules  generated by the elements $(X, Y)$ and $(X^2, XY)$ of $R^2$ respectively).
 It follows that $$\ann_R \left(\ext 1RMR\right)=\langle X\rangle \ \text{and} \ \ann_R \left(\ext 2RMR\right)=\langle X, Y\rangle.$$

Thus, there is not a subset $\Sigma$ of $\ass_R(M)$ such that
$\ann_R\left(\ext 2RMR\right)=\ann_R\left(M/(\bigcap_{\fp_i\in\Sigma}M_i)\right)$.
 Moreover, for $t=2$,  this example shows that in the second inclusion of (\ref{ext1}) in Remark \ref{rem1}, to obtain  a better upper bound (under inclusion)  of $\ann_R\left(\ext tRMR\right)$, we can not replace the index set $\mass_R^{d-t}(M)$ by the larger  sets $\mass_R^{\geqslant d-t}(M)$, $\ass_R^{d-t}(M)$ or $\ass_R^{\geqslant d-t}(M)$.
\end{exa}

\begin{exa}\label{exa2}  Let $K$ be a field and  let $R=K[[X, Y, Z, W]]$ be the ring of formal power series over  $K$  in indeterminates $X, Y, Z, W$. Then $R$ is a local ring with maximal ideal $\fm=\langle X, Y, Z, W\rangle$. Set $\fp_1=\langle X, Y\rangle$, $\fp_2=\langle Z, W\rangle$, and $M=R/(\fp_1\cap\fp_2)$. Then $\depth_R{(R/\fp_1)}=\depth_R{(R/\fp_2)}=2$, and hence $\h i{\fm}{R/\fp_1}=\h i{\fm}{R/\fp_2}=0$ for $i=0, 1$. Now, the exact sequence $$0\rightarrow M\rightarrow {R}/{\fp_1}\oplus {R}/{\fp_2}\rightarrow {R}/{\fm}\rightarrow 0$$ induces the  exact sequence
\begin{align*}
0&\rightarrow \h 0{\fm}M\rightarrow \h 0{\fm}{{R}/{\fp_1}}\oplus \h 0{\fm}{{R}/{\fp_2}}\rightarrow \h 0{\fm}{{R}/{\fm}}
\rightarrow \h 1{\fm}M\\&\rightarrow \h 1{\fm}{{R}/{\fp_1}}\oplus \h 1{\fm}{{R}/{\fp_2}}
\end{align*}
of local cohomology modules. It follows that $\h 0{\fm}M=0$ and  $\h 1{\fm}M\cong R/\fm$. Since $R$ is a regular ring,
it is  Gorenstein \cite[Proposition 3.1.20]{bh},  and hence $R$ is the canonical module of $R$ \cite[Theorem 3.3.7]{bh}. Therefore, by the Grothendieck  duality \cite[Theorem 11.2.8]{bs}, we have $\Hom{R}{\ext 3RMR}{E(R/\fm)}\cong\h 1{\fm}M$.
 Thus $\ann_R\left(\ext 3RMR\right)=\fm$.

 On the other hand, if  $M_1=\fp_1/(\fp_1\cap\fp_2)$ and $M_2=\fp_2/(\fp_1\cap\fp_2)$, then $0=M_1\cap M_2$ is a minimal primary decomposition of the zero submodule of $M$. Since $\ass^1_R(M)=\emptyset$, we have $$R=\ann_R \left({M}/{\bigcap_{\fp_i\in\ass^1_R(M)}M_i}\right)\nsubseteq \ann_R\left(\ext 3RMR\right).$$

 Therefore in the first inclusion of (\ref{ext1}) in Remark \ref{rem1}, to obtain  a better lower bound  of $\ann_R\left(\ext tRMR\right)$,
   we can not replace the index set $\ass_R^{\geqslant d-t}(M)$ by the smaller set $\ass_R^{d-t}(M)$.
\end{exa}
\section{\bf Bounds for the annihilators of  local cohomology modules}
In this section we investigate the annihilators of local cohomology modules. For an $R$-module $M$, we denote $\sup\{i\in\N_0: \h i{\fa}M\neq 0\}$ by $\cd {\fa}M$. Let $\fa$ be a proper  ideal of $R$,  $M$  a non-zero finitely generated $R$-module of dimension $d$, and $0=M_1\cap\hdots \cap M_n$ a minimal primary
decomposition of the zero submodule of $M$ with $\ass_R(M/M_i)=\{\fp_i\}$ for all $1\leq i\leq n$.  If $\cd{\fa}M=d<\infty$, then
$$\ann_R\left(\h d{\fa}M\right)=\ann_R\left(M/\bigcap_{\cd{\fa}{R/\fp_i}=d}M_i\right);$$
see \cite{asn2} and Introduction  for more details.

For an arbitrary  integer $t$, when $(R, \fm)$ is a local ring, we give a bound for the $\ann_R\left(\h t{\fm}M\right)$, see Theorem \ref{annh}. Also,
 whenever $R$ is not necessarily local, in Theorem \ref{annh2}, we provide a bound for   $\ann_R\left(\h {\cd {\fa}M}{\fa}M\right)$  which implies the above equality when $\cd{\fa}M=\dim_R(M)$. Finally,  when $M$ is Cohen-Macaulay, a bound of $\ann_R\left(\h t{\fa}M\right)$ is given and at $t=\grad{\fa}M$ this annihilator is computed in Theorem \ref{annh3}.

Assume $(R, \fm)$ is  a local ring. The $\fm$-adic completion $\hat R$ of $R$ is a faithfully flat $R$-module (see \cite[Theorem 8.14]{mat}), and so $R\subseteq \hat R$. Applying \cite[Theorem 23.2]{mat} to the ring homomorphism $\varphi :R\rightarrow \hat R$  we obtain the following lemma.

\begin{lem}[See {\cite[Theorem 23.2]{mat}} ]\label{ass2} Let $(R, \fm)$ be a local ring, and $M$  an $R$-module.  Then
\begin{enumerate}[\rm(i)]
  \item If $\fp\in\spec(R)$ and $\fP\in\ass_{\hat R}(\hat R/\fp \hat R)$, then $R\cap\fP=\fp$.
  \item $$\ass_{\hat R}(M\otimes_{R}\hat R)=\bigcup_{\fp\in\ass_R(M)}\ass_{\hat R}(\hat R/\fp \hat R).$$
\end{enumerate}
\end{lem}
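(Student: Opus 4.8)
The plan is to prove (i) directly from faithful flatness of $R\to\hat R$, and then to bootstrap (ii) out of (i) by means of primary decomposition, after reducing the nontrivial inclusion to the case of a finitely generated module.

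For (i), write $\fP=\ann_{\hat R}(\bar x)$ with $0\neq\bar x\in\hat R/\fp\hat R$. Since $\fp\hat R\subseteq\fP$ and $R\to\hat R$ is faithfully flat, $\fp=\fp\hat R\cap R\subseteq\fP\cap R$. For the reverse inclusion I would take $a\in\fP\cap R$ and argue by contradiction: if $a\notin\fp$, then $a$ maps to a nonzerodivisor of the domain $R/\fp$, hence, tensoring the injection $R/\fp\xrightarrow{\,a\,}R/\fp$ with the flat module $\hat R$, to a nonzerodivisor of $\hat R\otimes_R(R/\fp)=\hat R/\fp\hat R$; this contradicts $a\bar x=0$ with $\bar x\neq 0$. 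Hence $\fP\cap R=\fp$.

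For (ii), the inclusion $\supseteq$ holds for any $E$: if $\fp\in\ass_R(E)$ then $R/\fp\hookrightarrow E$, so flatness gives $\hat R/\fp\hat R\hookrightarrow E\otimes_R\hat R$ and therefore $\ass_{\hat R}(\hat R/\fp\hat R)\subseteq\ass_{\hat R}(E\otimes_R\hat R)$. For the inclusion $\subseteq$ I would first reduce to $E$ finitely generated: any $\fP\in\ass_{\hat R}(E\otimes_R\hat R)$ is the annihilator of an element which, being a finite sum of elementary tensors, already lies in $E_0\otimes_R\hat R$ for some finitely generated $E_0\subseteq E$ (and $E_0\otimes_R\hat R\hookrightarrow E\otimes_R\hat R$ by flatness), while $\ass_R(E_0)\subseteq\ass_R(E)$. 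Assuming $E$ finitely generated, I would fix a minimal primary decomposition $0=\bigcap_{i=1}^rQ_i$ with $\ass_R(E/Q_i)=\{\fp_i\}$; tensoring the embedding $E\hookrightarrow\bigoplus_iE/Q_i$ with the flat $\hat R$ reduces the claim to showing $\ass_{\hat R}((E/Q_i)\otimes_R\hat R)\subseteq\ass_{\hat R}(\hat R/\fp_i\hat R)$ for each $i$. Fixing $i$ and setting $Q=E/Q_i$, $\fp=\fp_i$: since $Q$ is $\fp$-coprimary, $R\setminus\fp$ consists of nonzerodivisors on $Q$, hence on $Q\otimes_R\hat R$ by flat base change, so the argument used in (i) shows every $\fP\in\ass_{\hat R}(Q\otimes_R\hat R)$ satisfies $\fP\cap R=\fp$; on the other hand a prime filtration of $Q$ has all its factors $R/\fr$ with $\fr\in\supp_R(Q)=V(\fp)$, so such a $\fP$ lies in $\ass_{\hat R}(\hat R/\fr\hat R)$ for one of these $\fr$, and part (i) forces $\fr=\fP\cap R=\fp$, giving $\fP\in\ass_{\hat R}(\hat R/\fp\hat R)$.

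I expect the delicate point to be precisely this last step: a prime filtration only guarantees a priori that the associated primes of $(E/Q_i)\otimes_R\hat R$ lie in $\ass_{\hat R}(\hat R/\fr\hat R)$ for various primes $\fr\supseteq\fp_i$, and one genuinely needs part (i) together with the nonzerodivisor observation to rule out $\fr\supsetneq\fp_i$ and pin $\fr$ down to $\fp_i$. As an alternative to the filtration argument, one could instead compute depth along the flat local homomorphism $R_{\fp}\to\hat R_{\fP}$ (the depth formula for flat extensions; see \cite{bh}), which yields at once the sharper statement that $\fP\in\ass_{\hat R}(E\otimes_R\hat R)$ if and only if $\fP\cap R\in\ass_R(E)$ and $\fP\in\ass_{\hat R}\bigl(\hat R/(\fP\cap R)\hat R\bigr)$, a statement containing both parts of the lemma (and again requiring only the finitely generated reduction above).
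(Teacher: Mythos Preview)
Your argument is correct in every detail. The paper, however, does not give a proof at all: it simply records the lemma as ``an immediate consequence of \cite[Theorem~23.2]{mat}'', which is the general statement (for an arbitrary flat homomorphism of Noetherian rings $A\to B$ and an arbitrary $A$-module $E$) that $\ass_B(E\otimes_A B)=\bigcup_{\fp\in\ass_A(E)}\ass_B(B/\fp B)$, together with the contraction property $\fP\cap A\in\ass_A(E)$ for $\fP\in\ass_B(E\otimes_A B)$; the lemma is just the specialisation to the completion map $R\to\hat R$.

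What you have written is, in effect, a self-contained proof of Matsumura's theorem in this special case: the nonzerodivisor/faithful-flatness computation for (i), the reduction of (ii) to finitely generated $E$, and then the primary-decomposition-plus-prime-filtration argument are exactly the standard ingredients. So there is no genuine divergence of method, only of presentation: the paper outsources the work to a reference, while you carry it out. Your closing remark about the depth formula along $R_\fp\to\hat R_\fP$ is also a well-known alternative route to the same conclusion.
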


 This lemma is used in the proof of the following theorem which is the main theorem of this section.
 \begin{thm} \label{annh}Let $(R, \fm)$ be a  local ring  and $t\in\N_0$. Let $M$ be  a non-zero finitely generated $R$-module, and
  $0=M_1\cap\hdots\cap M_n$  a minimal primary decomposition of the zero submodule of $M$ with $\ass_R(M/M_i)=\{\fp_i\}$ for all $1\leq i\leq n$.
Then
\begin{enumerate}[\rm(i)]
\item $\bigcap_{\fp_i\in\ass_R^{\geqslant t}(M)}M_i=S^t_M(0)$ and $\bigcap_{\fp_i\in\mass_R^{t}(M)}M_i=T^t_M(0)$,  where $S^t=R\setminus\bigcup_{\fp\in\ass_R^{\geqslant t}(M)}\fp$ and $T^t=R\setminus\bigcup_{\fp\in\mass_R^t(M)}\fp$. In particular, $\bigcap_{\fp_i\in\ass_R^{\geqslant t}(M)}M_i$ and $\bigcap_{\fp_i\in\mass_R^{t}(M)}M_i$ are independent of the  choice of minimal primary decomposition of the zero submodule of $M$.
  \item    $S^t_M(0)$ is the largest  submodule $N$ of $M$ such that $\dim_R(N)<t$.
  \item $$ \ann_R\left({M}/{S^t_M(0)}\right)\subseteq\ann_R\left(\h t{\fm}M\right)
 \subseteq \ann_R\left({M}/{T^t_M(0)}\right).$$
 In particular, for $t=\dim_R(M)$, there are the equalities $S^t_M(0)=T^t_M(0)={\bigcap_{\fp_i\in\assh_R(M)}M_i}$, and
$$\ann_R\left(\h {\dim_R(M)}{\fm}M\right)=\ann_R\left({M}/{\bigcap_{\fp_i\in\assh_R(M)}M_i}\right).$$
\end{enumerate}
\end{thm}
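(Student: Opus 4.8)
The plan is to deduce this theorem from Theorem \ref{annh1} by passing to the completion $\hat R$ and choosing an appropriate Gorenstein module there. First I would dispose of part (i): the sets $\ass_R^{\geqslant t}(M)$ and $\mass_R^t(M)$ are isolated subsets of $\ass_R(M)$ (if $\fq\subseteq\fp$ with $\dim_R(R/\fp)\geq t$, then $\dim_R(R/\fq)\geq\dim_R(R/\fp)\geq t$, giving the first; for the second, a prime contained in a minimal prime of $\ass_R(M)$ and lying in $\ass_R(M)$ must equal it), so (i) is immediate from Lemma \ref{primary}. Part (ii) is the standard fact that $S^t_M(o)=\bigcap_{\dim_R(R/\fp)\geq t}M_i$ is precisely the largest submodule of $M$ all of whose associated primes $\fp$ satisfy $\dim_R(R/\fp)<t$, i.e. the largest submodule of dimension $<t$; this follows from Lemma \ref{ass}, which identifies $\ass_R(M/S^t_M(o))=\ass_R^{\geqslant t}(M)$ and $\ass_R(S^t_M(o))=\ass_R(M)\setminus\ass_R^{\geqslant t}(M)$, together with $\dim_RN=\max\{\dim_R(R/\fp):\fp\in\ass_R(N)\}$.

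The heart of the matter is (iii). Here I would work over the complete local ring $\hat R$, which is Cohen-Macaulay whenever it admits a Gorenstein (equivalently canonical) module; more carefully, one first observes $\h t{\fm}M\cong\h t{\hat\fm}{\hat M}$ as $\hat R$-modules and $\ann_R(\h t{\fm}M)=R\cap\ann_{\hat R}(\h t{\hat\fm}{\hat M})$, and likewise the submodules $S^t_M(o)$, $T^t_M(o)$ behave well under the faithfully flat extension $R\to\hat R$ (using Lemma \ref{ass2} to match up associated primes and their dimensions, since $\dim_R(R/\fp)=\dim_{\hat R}(\hat R/\fP)$ for $\fP\in\ass_{\hat R}(\hat R/\fp\hat R)$ by \cite[Theorem 23.2]{mat}, and $\mass$, $\ass^{\geqslant t}$ transfer accordingly). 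The subtlety is that $\hat R$ need not be Cohen-Macaulay, so there may be no Gorenstein $\hat R$-module globally; the fix is to reduce to a quotient. Writing $\bar R=\hat R/\ann_{\hat R}\hat M$ and noting $\hat M$ is a faithful $\bar R$-module, one uses that $\bar R$ is a quotient of $\hat R$ and — after possibly further care — that the relevant local cohomology and dimension data only see $\supp\hat M=\supp_{\bar R}\hat M$; but the genuinely clean route is: the Local Duality / Lichtenbaum–Hartshorne machinery gives $\ann_{\hat R}(\h t{\hat\fm}{\hat M})$ directly in terms of $\hat M_i$'s with $\dim(\hat R/\hat{\fP_i})=t$, and the appearance of the canonical module $\omega_{\hat R}$ in the local duality isomorphism $\h t{\hat\fm}{\hat M}^\vee\cong\ext{d-t}{\hat R}{\hat M}{\omega_{\hat R}}$ (when $\hat R$ is Cohen-Macaulay) is exactly where Theorem \ref{annh1}(iii)–(iv) applies, with $N=\omega_{\hat R}$ a Gorenstein module, $\grad{\fp}{\omega_{\hat R}}=\Ht{\hat R}{\fp}=d-\dim(\hat R/\fp)$, so that the index set $\Delta$ there becomes $\{\fP:\dim(\hat R/\fP)\geq t\}$ and $\Sigma$ becomes $\{\fP\in\mass:\dim(\hat R/\fP)=t\}$, matching (iii) after descent to $R$.

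So the key steps, in order, are: (1) reduce to the complete case, checking the two submodules and the annihilator of $\h t{\fm}M$ descend correctly along $R\to\hat R$ via Lemma \ref{ass2}; (2) handle the non-Cohen-Macaulay case of $\hat R$ — the cleanest device is to note that $\h t{\hat\fm}{\hat M}$ depends only on $\hat M$ as a module over $\hat R/\ann\hat M$, and $\hat R/\ann\hat M$ need not be Cohen-Macaulay either, so instead I would argue that we may replace $\hat M$ by $\hat M$ viewed over a Gorenstein (or at least Cohen-Macaulay-with-canonical-module) quotient ring obtained from a Noether normalization / Cohen's structure theorem presentation $\hat R/\ann\hat M$ as a finite module over a complete regular local ring $A$, where $\omega_A=A$ is Gorenstein and $\h t{\hat\fm}{\hat M}\cong\h t{\fn}{\hat M}$ for $\fn$ the maximal ideal of $A$; (3) apply Local Duality over $A$ to get $\h t{\fn}{\hat M}^\vee\cong\ext{\dim A-t}{A}{\hat M}{A}$ and invoke Theorem \ref{annh1}(iii) with $N=A$ (Gorenstein), reading off both inclusions; (4) translate the index sets using $\Ht A\fp=\dim A-\dim_R(R/(\fp\cap R))$ and descend to $R$; (5) for the last assertion, set $t=\dim_RM=d$: then $\ass_R^{\geqslant d}(M)=\mass_R^d(M)=\assh_R(M)$ (every associated prime of maximal dimension is minimal in $\ass_R(M)$), forcing $S^d_M(o)=T^d_M(o)=\bigcap_{\fp_i\in\assh_R(M)}M_i$, and the two inclusions in (iii) collapse to the stated equality; this also recovers $t=\grad{\ann M}{\omega}=d-\dim_RM=0$ in the top-dimensional reformulation, consistent with equation (\ref{ext2}). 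I expect step (2) — arranging a Gorenstein module over which $\hat M$ lives so that Theorem \ref{annh1} is applicable, despite $\hat R$ itself possibly failing to be Cohen-Macaulay — to be the main obstacle; everything else is bookkeeping with associated primes, dimensions, and the faithfully flat base change $R\to\hat R$.
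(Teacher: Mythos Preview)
Your overall strategy matches the paper's: parts (i) and (ii) via isolated subsets and Lemma~\ref{ass}, and part (iii) by passing to the completion, presenting $\hat M$ over a Gorenstein local ring (you use Noether normalization over a regular $A$; the paper uses Cohen's structure theorem and quotients by a maximal regular sequence to get a Gorenstein $R''$ of dimension $\dim R$), applying Local Duality, and invoking Theorem~\ref{annh1}/Remark~\ref{rem1}. The first inclusion in (iii) does not even need completion: since $\dim_R S^t_M(0)<t$ one has $\h t{\fm}M\cong\h t{\fm}{M/S^t_M(0)}$ directly.

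There is, however, a genuine gap in your descent step. Your claim that $\dim_R(R/\fp)=\dim_{\hat R}(\hat R/\fP)$ for every $\fP\in\ass_{\hat R}(\hat R/\fp\hat R)$ is false for an arbitrary Noetherian local ring: one only has $\dim_{\hat R}(\hat R/\fP)\leq\dim_{\hat R}(\hat R/\fp\hat R)=\dim_R(R/\fp)$, with equality precisely for $\fP\in\assh_{\hat R}(\hat R/\fp\hat R)$, and $\hat R/\fp\hat R$ need not be equidimensional. Consequently the sets $\mass_R^t(M)$ and $\mass_{\hat R}^t(\hat M)$ do not correspond as cleanly as you assert, and one cannot simply identify $\widehat{T^t_M(0)}$ with $T^t_{\hat M}(0)$. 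The paper's proof shows this descent is delicate: given $x\in\ann_R\h t{\fm}M$ and $\fp_i\in\mass_R^t(M)$, one chooses $\fP_k\in\assh_{\hat R}(\hat M/\widehat{M_i})$, verifies that this particular $\fP_k$ lies in $\mass_{\hat R}^t(\hat M)$ (using minimality of $\fp_i$ and the dimension bound), deduces $x\hat M\subseteq\mathcal M_k$, then compares $\mathcal M_k$ with the $\fP_k$-primary component $\mathcal N_k$ of $\widehat{M_i}$ to obtain $\dim_{\hat R}x(\hat M/\widehat{M_i})<t$, whence $\dim_R x(M/M_i)<t$ and finally $xM\subseteq M_i$. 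This argument, which you dismissed as bookkeeping, is the substantive part of the proof; your step~(2) was correctly flagged as an issue, but it is step~(1)/(4) that actually fails as written.
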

\begin{proof}
Set $S=S^t_M(0)$ and $T=T^t_M(0)$.  It is clear that  $\ass_R^{\geqslant t}(M)$  and $\mass_R^t(M)$ are isolated subsets of $\ass_R(M)$, and hence (i) follows from Lemma \ref{primary}.

To prove (ii), first note that $\ass_R(S)=\ass_R(M)\setminus\ass_R^{\geqslant t}(M)$ by  Lemma \ref{ass} and hence $\dim_R(S)<t$.
Now, assume that $N$ is a submodule of $M$ such that $\dim_R(N)<t$. Suppose, for the sake of contradiction, that $N\nsubseteq S$. Then
 $$0\neq {N}/({N\cap S})\cong({N+S})/{S}\subseteq {M}/{S}.$$  Hence
  $$\emptyset\neq\ass_R\left( {N}/{\left(N\cap S\right)}\right)\subseteq\ass_R\left( {M}/{S}\right)=\ass_R^{\geqslant t}(M)$$
  which is impossible, because  $\dim_R \left({N}/\left({N\cap S}\right)\right)\leq\dim_R(N)<t$. This proves (ii).

Now, we prove (iii). In the case when $t=\dim_R(M)$, it is clear that
 $$\mass^t_R(M)=\ass^{\geqslant t}_R(M)=\assh_R(M),$$   and so  $S^t_M(0)=T^t_M(0)={\bigcap_{\fp_i\in\assh_R(M)}M_i}$. Therefore  the first part of (iii)  yields the equality
$\ann_R\left(\h {\dim_R(M)}{\fm}M\right)=\ann_R\left({M}/{\bigcap_{\fp_i\in\assh_R(M)}M_i}\right)$ whenever $t=\dim_R(M)$.
 Also, we saw in (ii) that $\dim_R(S)<t$, and so we obtain $\h t{\fm}M\cong\h t{\fm}{M/S}$. Therefore
 $$\ann_R\left({M/S}\right)\subseteq\ann_R\left(\h t{\fm}{M/S}\right)=\ann_R\left(\h t{\fm}M\right).$$
  Thus, to complete the proof of (iii), it only remains to prove that
   $$\ann_R\left(\h t{\fm}M\right)\subseteq\ann_R\left(M/T\right).$$
 Set $d=\dim_R(R)$. First, assume that $R$ is complete.   By the Cohen's structure theorem for complete local rings \cite[Theorem 29.4(ii)]{mat}, there is a  complete regular local ring $R'$  such that $R=R'/I$ for some ideal $I$ of $R'$. Now, let $h=\Ht {R'}I$ and  $x_1,\cdots,x_h$  a maximal $R'$-sequence  in $I$. Set ${R''}=R'/(x_1,\cdots,x_h)$ and
 $J=I/(x_1,\cdots,x_h)$. Then ${R''}$ is a  local Gorenstein ring of dimension $d$ \cite[Corollary 3.1.15]{bh} and $R\cong {R''}/J$.
 Now, let $\fn$ be the maximal ideal of ${R''}$. Then  $\fm\cong\fn/J$. By the  Grothendieck duality for Gorenstein rings \cite[Theorem 11.2.5]{bs}, there is the following isomorphism of $R''$-modules
 $$
 \h t{\fn}M\cong \Hom {R''}{\ext {d-t}{R''}M{R''}}{E_{R''}({R''}/\fn)}.
$$
Also, by using the Independence Theorem  under  the ring homomorphism  $R''\rightarrow R''/J\cong R$, we obtain
the following isomorphism of $R''$-modules
$$\h t{\fn}M\cong\h t{\fn (R''/J)}M\cong\h t{\fm}M$$
(we recall that $\fn/J\cong\fm$).
We refer the reader about the Independence Theorem  to \cite[Theorem 4.2.1]{bs} or \cite[Proposition 2.11(2)]{h}. Also, we note that any $R$-module $N$ has an $R''$-module structure given by $r''\cdot x=(r''+J)x=\psi(r''+J)x$ for all $r''\in R''$ and $x\in N$, where $\psi$ denotes the ring isomorphism from $R''/J$ to $R$. Hence, by \cite[Remarks 10.2.2(ii)]{bs}, we have
$$\ann_{R''}\left(\h t{\fm}M\right)=\ann_{R''}\left(\h t{\fn}M\right)=\ann_{R''}\left(\ext {d-t}{R''}M{R''}\right).$$

 Let, for each $1\leq i\leq n$, $P_i$ be the contraction of  $\fp_i$ in ${R''}$ under the ring homomorphism  $R''\rightarrow R''/J\cong R$. Then  $\ass_{R''}(M)=\{P_1,\ldots,P_n\}$ and there is the bijective  correspondence  between the sets $\ass_{R''}(M)$ and $\ass_R(M)$ given by $P_i\leftrightarrow\fp_i$. Also,
   $0=M_1\cap\hdots\cap M_n$ is  a minimal primary decomposition of the zero submodule of $M$ as  ${R''}$-modules with $\ass_{R''}(M/M_i)=\{P_i\}$ for all $1\leq i\leq n$. Since ${R''}$ is Gorenstein, by Remark \ref{rem1}(\ref{ext1}), we obtain \begin{align*}
  \ann_{R''}\left(\h t{\fm}M\right)\subseteq \ann_{R''}\left({M}/{\bigcap_{P_i\in\mass^{t}_{R''}(M)}M_i}\right).
  \end{align*}
 For any $R$-module $N$, we have $J\subseteq \ann_{R''}(N)$ and so $\ann_{{R''}/J}(N)=(\ann_{R''}(N))/J$.  Therefore the above inclusion  proves the  claimed inclusion  in the case where $R$ is complete.

 Now, suppose  that $R$ is not necessarily complete.    Assume $0=\bigcap_{k\in K}\mathcal{M}_k$ is a minimal $\hat R$-primary decomposition of the zero submodule of $\hat M$ with $\ass_{\hat R}(\hat M/\mathcal {M}_k)=\{\fP_k\}$. Since $\ass_{\hat R}(\hat M)=\bigcup_{i=1}^n\ass_{\hat R}(\hat R/\fp_i \hat R)$, there exists  subsets $K_1,\dots, K_n$ of $K$ such that $K=\bigcup_{i=1}^n {K_i}$,  and for each $i$, $\ass_{\hat R}(\hat R/\fp_i \hat R)=\{\fP_k: k\in K_i\}$. Also, the subsets $K_1,\dots, K_n$ of $K$ are disjoint by Lemma \ref{ass2}(i).

Assume  $x\in\ann_R\left(\h t{\fm}M\right)$ and $\fp_i\in\mass_R^{t}(M)$. By the complete case,
\begin{equation}\label{eqann}
x\hat R\subseteq\ann_{\hat R}\left(\h t{\fm\hat R}{\hat M}\right)\subseteq\ann_{\hat R}\left({\hat M}/{\bigcap_{\fP_k\in\mass_{\hat R}^{t}(\hat M)}\mathcal{M}_k}\right).
\end{equation}
Now, suppose that $k\in K_i$ and $\fP_k\in\assh_{\hat R}(\hat M/\widehat{M_i})$ (note that, by Lemma \ref{ass2}, $\ass_{\hat R}(\hat M/\widehat{M_i})=\ass_{\hat R}(\hat R/\fp_i\hat R)$). We have
 $$\dim_{\hat R}(\hat R/\fP_k)=\dim_{\hat R}(\hat M/\widehat{M_i})=\dim_R(M/M_i)=\dim_R(R/\fp_i)=t.$$
 We show that $\fP_k$ is a minimal element of $\ass_{\hat R}(\hat M)$.
  Assume that $1\leq i'\leq n$, $k'\in K_{i'}$,  and $\fP_{k'}\subseteq \fP_k$. Then $\fp_{i'}=\fP_{k'}\cap R\subseteq \fP_k\cap R=\fp_i$. Since $\fp_i$ is a minimal element of $\ass_R(M)$ and $K_1,\dots,K_n$ are disjoint sets, we deduce that $i=i'$. It follows that both  $\fP_k$ and $\fP_{k'}$ are elements of $\ass_{\hat R}(\hat M/\widehat{M_i})$. Therefore $$\dim_{\hat R}(\hat M/\widehat{M_i})=\dim_{\hat R}(\hat R/\fP_k)\leq\dim_{\hat R}(\hat R/\fP_{k'})\leq\dim_{\hat R}(\hat M/\widehat{M_i}),$$
and hence $\fP_k=\fP_{k'}$. Thus $\fP_k\in\mass_{\hat R}^{t}(\hat M)$ and  inclusion (\ref{eqann})   yields  $x\hat M\subseteq \mathcal{M}_k$. 
Since $\fP_k$ is a minimal element of  $\ass_{\hat R}(\hat M/\widehat{M_i})$, it follows that the contraction of $(\widehat{M_i})_{\fP_k}$  under the canonical map $\hat M\rightarrow \hat M_{\fP_k}$, denoted by $\mathcal N_k$, is the $\fP_k$-primary component of each minimal primary decomposition of $\widehat{M_i}$ in $\hat M$ (see Lemma \ref{primary} or \cite[Theorem 6.8.3(iii)]{mat}). Hence $\mathcal{N}_k/\widehat{M_i}$ is the $\fP_k$-primary component of each minimal primary decomposition of $0$ in $\hat M/\widehat{M_i}$. Also, we have   $\mathcal M_k\subseteq\mathcal N_k$ because $\mathcal M_k$ is the contraction of the zero submodule under the map $\hat M\rightarrow \hat M_{\fP_k}$. Therefore $x(\hat M/\widehat{M_i})\subseteq \mathcal{N}_k/\widehat{M_i}$.
 Since $\fP_k$ is an arbitrary element  of $\assh_{\hat R}(\hat M/\widehat{M_i})$, we have
 $x(\hat M/\widehat{M_i})\subseteq \bigcap_{\fP_k\in\assh_{\hat R}(\hat M/\widehat{M_i})}\mathcal{N}_k/\widehat{M_i}.$
  Hence, by Lemma \ref{ass},  $\ass_{\hat R}(x({\hat M}/{\widehat{M_i}}))\subseteq\ass_{\hat R}({\hat M}/{\widehat{M_i}})\setminus \assh_{\hat R}({\hat M}/{\widehat{M_i}})$. This yields
 $$\dim_R\left(x(M/M_i)\right)=\dim_{\hat R}(x({\hat M}/{\widehat{M_i}}))<\dim_{\hat R}(\hat M/\widehat{M_i})=t.$$ Therefore  $\fp_i\notin\ass_R(x(M/M_i))$ and hence $\ass_R(x(M/M_i))=\emptyset$ or equivalently $xM\subseteq M_i$. This proves the claimed inclusion and  completes the proof.
\end{proof}

    Now, in the following theorem, we give a bound for the annihilator of top local cohomology module  without the local assumption on $R$. But before that, we need the following lemma.
 \begin{lem}[{\cite[Theorem 2.2]{dnt}}]\label{cd}
 Let $\fa$ be an ideal of $R$ and  $M, N$ two finitely generated $R$-modules such that $\supp_R(M)\subseteq\supp_R(N)$. Then $\cd {\fa}M\leq\cd {\fa}N$.
\end{lem}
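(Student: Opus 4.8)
The plan is to reduce the claim $\cd{\fa}M\le\cd{\fa}N$ to a statement about supports and known vanishing behaviour of local cohomology. The key observation is that the support condition $\supp_R(M)\subseteq\supp_R(N)$ forces every prime in $\supp_R(M)$ to contain $\ann_R(N)$, hence $\sqrt{\ann_R(N)}\subseteq\sqrt{\ann_R(M)}$. By a standard result, for a finitely generated module the vanishing of local cohomology depends only on the support: if $\supp_R(M)\subseteq\supp_R(N)$ then $\h i{\fa}M$ can be built up from $\h i{\fa}N$ by finitely many short exact sequences and direct sums. Concretely, I would invoke Gruson's theorem (or the elementary filtration argument): since $M$ is finitely generated with $\supp_R(M)\subseteq\supp_R(N)$, the module $M$ admits a finite filtration whose successive quotients are quotients of finite direct sums of copies of $N$.

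First I would set up this filtration: by Gruson's theorem there is a chain $0=M_0\subseteq M_1\subseteq\cdots\subseteq M_r=M$ such that each $M_j/M_{j-1}$ is a homomorphic image of a finite direct sum $N^{k_j}$. Second, I would argue by induction on $j$ that $\cd{\fa}{M_j}\le\cd{\fa}N$. For the inductive step, from the short exact sequence $0\to M_{j-1}\to M_j\to M_j/M_{j-1}\to 0$ and the long exact sequence of local cohomology it suffices to bound $\cd{\fa}{M_j/M_{j-1}}$ and $\cd{\fa}{M_{j-1}}$; the latter is handled by the inductive hypothesis. For the former, $M_j/M_{j-1}$ is a quotient $N^{k_j}\twoheadrightarrow M_j/M_{j-1}$, and from the induced long exact sequence together with the fact that $\cd{\fa}{N^{k_j}}=\cd{\fa}N$ and that the kernel again has support inside $\supp_R(N)$ — so one may either induct on the number of generators or again apply the filtration — we get $\cd{\fa}{M_j/M_{j-1}}\le\cd{\fa}N$. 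Third, taking $j=r$ gives the claim.

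The main obstacle I anticipate is the circularity lurking in the step that bounds $\cd{\fa}$ of a quotient of $N^{k}$: the kernel of $N^{k}\twoheadrightarrow M_j/M_{j-1}$ has support contained in $\supp_R(N)$, so naively one would want to apply the lemma again to this kernel, which is exactly what we are proving. The clean way around this is to phrase the whole argument as a single induction on the number of generators of $M$ (so that the kernel, being a submodule of $N^k$ generated by fewer elements after a suitable choice, falls under the inductive hypothesis), or to appeal directly to the statement of Gruson's theorem in the strong form that already packages the filtration in terms of $N$ alone. Once the filtration is in hand, the rest is a routine diagram chase through long exact sequences of local cohomology, using only that $\h i{\fa}{-}$ vanishes above $\cd{\fa}N$ on every module appearing. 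Since the excerpt cites \cite[Theorem 2.2]{dnt} for this lemma, I would in fact simply reference that source and sketch the filtration argument above as the idea of proof.
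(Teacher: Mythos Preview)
The paper itself gives no proof of this lemma; it is simply quoted from \cite[Theorem~2.2]{dnt}. So there is nothing in the paper to compare your argument against, and your final remark---that one may just cite the source---already matches what the paper does.

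Your sketch via Gruson's filtration is the standard route and is essentially correct, but the fix you propose for the circularity (``induct on the number of generators'' of the kernel, or invoke an unspecified ``strong form'' of Gruson) is not quite the right lever: the kernel of $N^{k}\twoheadrightarrow M_j/M_{j-1}$ need not have fewer generators than $M$, so that induction does not terminate. The clean way to break the loop is to run a \emph{descending induction on the cohomological degree} $i$, using a uniform upper bound for vanishing (for instance $\h i{\fa}X=0$ for all $i>\operatorname{ara}(\fa)$ and all $X$, which is finite since $R$ is Noetherian). Assuming $\h{i+1}{\fa}X=0$ for every finitely generated $X$ with $\supp_R(X)\subseteq\supp_R(N)$, the long exact sequence attached to $0\to K\to N^{k}\to Q\to 0$ gives $\h i{\fa}Q$ trapped between $\h i{\fa}{N^{k}}=0$ and $\h{i+1}{\fa}K=0$ (the latter by the inductive hypothesis applied to $K$, whose support lies in $\supp_R(N)$). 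Feeding this into your filtration step then yields $\h i{\fa}M=0$ for all $i>\cd{\fa}N$, as desired.
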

Assume $\fa$ is an ideal of $R$ and $M$ is a finitely generated $R$ module. Since $\supp_R(M)=\supp_R\left(\bigoplus_{\fp\in\ass_R(M)}R/\fp\right)$, above lemma implies that $$\cd{\fa}M=\operatorname{cd}_{R}\left(\fa, \bigoplus_{\fp\in\ass_R(M)}R/\fp\right)=\sup\{\cd{\fa}{R/\fp}: \fp\in\ass_R(M)\}.$$
By \cite[Exercise 6.2.6 and Theorem 6.2.7]{bs}, $\h i{\fa}M$ is non-zero for all $i$ if and only if $M=\fa M$,  and so,  in this case, we have $\cd{\fa}M=\sup\emptyset=-\infty$. On the other hand, if $\fa$ is generated by $t\in\N_0$ elements,  then $\cd{\fa}M\leq t<\infty$; see \cite[Theorem 3.3.1]{bs}. Hence $\cd{\fa}M$ is a non-negative integer if and only if $M\neq\fa M$.
\begin{thm} \label{annh2} Let $M$ be a non-zero finitely generated $R$-module and $\fa$ an ideal of $R$ such that $M\neq\fa M$. Let $c=\cd{\fa}M$
 and $0=M_1\cap\hdots\cap M_n$  a minimal primary decomposition of the zero submodule of $M$ with $\ass_R(M/M_i)=\{\fp_i\}$ for all $1\leq i\leq n$. Set $\Delta=\{\fp\in\ass_R(M): \cd{\fa}{R/\fp}=c\}$ and $\Sigma=\{\fp\in\ass_R(M): \cd{\fa}{R/\fp}=\dim_R (R/\fp)=c\}$.
Then
\begin{enumerate}[\rm(i)]
\item $\bigcap_{\fp_i\in\Delta}M_i=S_M(0)$,  where $S=R\setminus\bigcup_{\fp_i\in\Delta}\fp$. In particular, $\bigcap_{\fp_i\in\Delta}M_i$ is independent of the choice of minimal primary decomposition of the zero submodule of $M$.
  \item    $S_M(0)$ is the largest  submodule $N$ of $M$ such that $\cd{\fa}N<c$.
  \item $$\ann_R\left({M}/{\bigcap_{\fp_i\in\Delta}M_i}\right)\subseteq\ann_R(\h c{\fa}M)
 \subseteq \ann_R\left({M}/{\bigcap_{\fp_i\in\Sigma}M_i}\right).$$
In particular, when $c=\dim_R(M)$, there are the equalities $\Delta=\Sigma$ and
$$\ann_R\left(\h {\dim_R(M)}{\fa}M\right)=\ann_R\left({M}/{S_M(0)}\right).$$
\end{enumerate}
\end{thm}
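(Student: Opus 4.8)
\textbf{Proof proposal for Theorem \ref{annh2}.}
The plan is to reduce everything to Theorem \ref{annh}, exactly in the way Theorem \ref{annh} itself was reduced to Remark \ref{rem1}. First, parts (i) and (ii) are handled almost verbatim as before: the sets $\Delta$ and $\Sigma$ are isolated subsets of $\ass_R(M)$ (for $\Delta$ this uses Lemma \ref{cd}, which gives $\cd{\fa}{R/\fq}\leq\cd{\fa}{R/\fp}$ whenever $\fq\subseteq\fp$, so $\cd{\fa}{R/\fp}=c$ forces $\cd{\fa}{R/\fq}=c$; for $\Sigma$ one combines this with the fact that $\fq\subseteq\fp$ and $\dim_R(R/\fp)=c$ force $\dim_R(R/\fq)=c$, hence $\fq=\fp$, so actually $\Sigma\subseteq\mass_R(M)$). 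Then Lemma \ref{primary} gives the stated identifications with $S^c_M(0)$ and $T^c_M(0)$ and the independence of the chosen decomposition. For (ii), by Lemma \ref{ass}, $\ass_R(S^c_M(0))=\ass_R(M)\setminus\Delta$, and using the displayed formula $\cd{\fa}X=\sup\{\cd{\fa}{R/\fp}:\fp\in\ass_R(X)\}$ recorded just before the theorem, one gets $\cd{\fa}{S^c_M(0)}<c$; conversely if $N\subseteq M$ has $\cd{\fa}N<c$ then $N\nsubseteq S^c_M(0)$ would produce a prime in $\ass_R((N+S^c_M(0))/S^c_M(0))\subseteq\ass_R(M/S^c_M(0))=\Delta$ which is also in $\ass_R(N)$, contradicting $\cd{\fa}N<c$ again via the same sup formula.

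For (iii), the first inclusion is the easy half: since $\cd{\fa}{S^c_M(0)}<c$, the short exact sequence $0\to S^c_M(0)\to M\to M/S^c_M(0)\to 0$ gives $\h c{\fa}M\cong\h c{\fa}{M/S^c_M(0)}$, whence $\ann_R(M/S^c_M(0))\subseteq\ann_R(\h c{\fa}{M/S^c_M(0)})=\ann_R(\h c{\fa}M)$. The real content is the upper bound $\ann_R(\h c{\fa}M)\subseteq\ann_R(M/T^c_M(0))$. The idea is to localize and work prime by prime over $\Sigma$: fix $\fp_i\in\Sigma$ and $x\in\ann_R(\h c{\fa}M)$; I want to show $xM\subseteq M_i$, and as in the proof of Theorem \ref{annh} it suffices (after passing to $M_{\fp_i}$ and using that $(M_j)_{\fp_i}=M_{\fp_i}$ for $j\neq i$, since $\fp_i$ is minimal in $\ass_R(M)$) to show $x\in(\ann_R(M/M_i))_{\fp_i}$. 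For this I pass to the complete local ring $\widehat{R_{\fp_i}}$ and relate $\h c{\fa}M$ localized-and-completed at $\fp_i$ to a top local cohomology of $\widehat{M_{\fp_i}}$ with respect to the maximal ideal: the key point is that $\cd{\fa}{R/\fp_i}=\dim_R(R/\fp_i)=c$ together with the Lichtenbaum--Hartshorne Vanishing Theorem forces $\sqrt{\fa\widehat{R_{\fp_i}}+\fP}=\fm\widehat{R_{\fp_i}}$ for every $\fP\in\assh_{\widehat{R_{\fp_i}}}(\widehat{M_{\fp_i}})$, so that $\h{c}{\fa\widehat{R_{\fp_i}}}{\widehat{M_{\fp_i}}}$ agrees (up to a submodule of smaller dimensional support that is killed by the relevant structure) with $\h{c}{\fm\widehat{R_{\fp_i}}}{\widehat{M_{\fp_i}}}$, which is the top local cohomology $\h{\dim}{}{}$ controlled by Theorem \ref{annh}(iii). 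Applying the ``In particular'' part of Theorem \ref{annh}(iii) to $\widehat{M_{\fp_i}}$ then pins down the annihilator as $\ann(\widehat{M_{\fp_i}}/\bigcap_{\assh}(\cdots))$, from which $x$ lands in the $\fP$-primary component for each relevant $\fP$, hence in $\widehat{M_i}$-localized, hence (contracting) $x\in(\ann_R(M/M_i))_{\fp_i}$ as wanted. Finally, when $c=\dim_RM$ every $\fp\in\Delta$ satisfies $\dim_R(R/\fp)\leq\dim_RM=c$ and $\cd{\fa}{R/\fp}\leq\dim_R(R/\fp)$, so $\cd{\fa}{R/\fp}=c$ forces $\dim_R(R/\fp)=c$; thus $\Delta=\Sigma=\assh_R(M)$, $S^c_M(0)=T^c_M(0)$, and the two inclusions collapse to the asserted equality, recovering \eqref{eq1}.

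The main obstacle I expect is the bookkeeping in the upper-bound step: reducing the top local cohomology of $M$ over the non-local ring $R$ to a top local cohomology over a complete local ring in such a way that Theorem \ref{annh} applies. One must be careful that $\cd{\fa}M=c$ is only controlled by those $\fp\in\ass_R(M)$ with $\cd{\fa}{R/\fp}=c$, and that after localizing at a \emph{minimal} such prime the relevant associated primes of the completion all have colength matching the dimension, so that Lichtenbaum--Hartshorne converts the $\fa$-local cohomology into $\fm$-local cohomology without changing the annihilator. Everything else is a faithful transcription of the arguments already carried out for Theorem \ref{annh1} and Theorem \ref{annh}; the isolated-subset/localization lemmas (Lemmas \ref{ass}, \ref{primary}, \ref{ass2}) do all the heavy lifting for parts (i) and (ii).
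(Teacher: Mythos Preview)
Your treatment of (i), (ii), and the first inclusion in (iii) is correct and matches the paper's argument closely. The gap is in your proposed proof of the upper bound $\ann_R(\h c{\fa}M)\subseteq\ann_R(M/T^c_M(0))$.

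You propose to localize at $\fp_i\in\Sigma$. But since $\Sigma\subseteq\mass_R(M)$ (as you yourself note), the module $M_{\fp_i}$ has dimension $0$ over $R_{\fp_i}$, and so does $\widehat{M_{\fp_i}}$ over $\widehat{R_{\fp_i}}$. Hence $\h{c}{\fa\widehat{R_{\fp_i}}}{\widehat{M_{\fp_i}}}=0$ whenever $c>0$, and the ``top local cohomology $\h{\dim}{}{}$'' you want to invoke is $\h^0$, not $\h^c$. The condition $\cd{\fa}{R/\fp_i}=\dim_R(R/\fp_i)=c$ concerns chains of primes \emph{above} $\fp_i$ in $R$; all of that is destroyed by localizing at $\fp_i$. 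Moreover there is no direct comparison between $\ann_R(\h c{\fa}M)$ and anything over $R_{\fp_i}$: the module $\h c{\fa}M$ is typically not finitely generated, and $(\h c{\fa}M)_{\fp_i}$ need not agree with $\h c{\fa R_{\fp_i}}{M_{\fp_i}}$. So the reduction you sketch does not go through, and with it the ``In particular'' clause collapses as well.

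The paper argues in the opposite order. It first proves the \emph{equality} $\ann_R(\h c{\fa}M)=\ann_R(M/S)$ in the special case $c=\dim_RM$: when $(R,\fm)$ is complete local, Lichtenbaum--Hartshorne identifies $\Delta$ with $\{\fp\in\assh_R(M):\sqrt{\fa+\fp}=\fm\}$, whence $\sqrt{\fa+\ann_R(M/S)}=\fm$ and the Independence Theorem gives $\h c{\fa}{M/S}\cong\h c{\fm}{M/S}$, to which Theorem~\ref{annh} applies; the non-local case is handled by taking $x$ with $xM\nsubseteq S$, choosing a \emph{maximal} ideal $\fm$ (not an associated prime) with $\h c{\fa R_\fm}{xM_\fm}\neq 0$, and completing there. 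Only afterwards is the general upper bound deduced: since $\cd{\fa}T\leq c$, the map $\h c{\fa}M\to\h c{\fa}{M/T}$ is surjective, and $M/T$ satisfies $\dim_R(M/T)=\cd{\fa}{M/T}=c$, so the already-established special case gives $\ann_R(\h c{\fa}{M/T})=\ann_R(M/T)$.
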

\begin{proof} Set $S=\bigcap_{\fp_i\in\Delta}M_i$ and $T=\bigcap_{\fp_i\in\Sigma}M_i$.

(i) If $\fq\in\ass_R (M)$ and $\fq\subseteq \fp$ for some $\fp\in\Delta$, then, by Lemma \ref{cd},  $$c=\cd{\fa}{R/\fp}\leq\cd{\fa}{R/\fq}\leq\cd{\fa}M=c.$$
It follows that $\fq\in\Delta$, and hence $\Delta$ is an isolated subset of $\ass_R (M)$. Therefore  (i) follows from  Lemma \ref{primary}.

(ii) Lemma \ref{ass} implies that $\ass_R (S)=\{\fp\in\ass_R (M): \cd{\fa}{R/\fp}<c\}$. Hence, by Lemma \ref{cd},  $\cd {\fa}{S}<c$. Also, if $N$ is a submodule of $M$ such that $\cd{\fa}N<c$, then
$$\ass_R ({N}/({N\cap S}))=\ass_R (({N+S})/{S})\subseteq\ass_R ({M}/{S})=\Delta.$$
Thus, if $\ass_R ({N}/({N\cap S}))\neq\emptyset$, then $c=\cd{\fa}{{N}/({N\cap S})}\leq\cd{\fa}N$, which is impossible. Therefore
$N\subseteq S$ and the proof of (ii) is completed.

(iii) We proved  in (ii) that  $\cd{\fa}{S}<c$. Therefore $\h c{\fa}M\cong\h c{\fa}{M/S}$ and hence
 $$\ann_R(M/S)\subseteq\ann_R(\h c{\fa}{M/S})=\ann_R(\h c{\fa}M).$$
 This proves the first inclusion. Now, we prove the second inclusion claimed in (iii).

  {\bf Case 1:} Assume that $c=\dim_R(M)$ and $(R, \fm)$ is a complete local ring. For each prime ideal $\fp$, in view of the Grothendieck's Vanishing Theorem
  \cite[Theorem 6.1.2]{bs}, we have $\cd{\fa}{R/\fp}\leq\dim_R (R/\fp)$. It follows that $\Delta=\Sigma$, and so  $S=T$. Also, we have  $\Delta=\{\fp\in\assh_R(M): \sqrt{\fa+\fp}=\fm\}$ by the Lichtenbaum-Hartshorne Theorem. Therefore
  $$\sqrt{\fa+\ann_R(M/S)}=\sqrt{\fa+\bigcap_{\fp\in\ass_R (M/S)}\fp}=\sqrt{\fa+\bigcap_{\fp\in\Delta}\fp}.$$
 Since $M$ is a finitely generated $R$-module, the set $\Delta$ is  finite and so
 $$\sqrt{\fa+\bigcap_{\fp\in\Delta}\fp}=\sqrt{\bigcap_{\fp\in\Delta}(\fa+\fp)}=\bigcap_{\fp\in\Delta}\sqrt{\fa+\fp}=\fm.$$
 (Note that for ideals $\fa, \fb, \fc$ and  prime ideal $\fq$, we have $(\fa+\fb)\cap(\fa+\fc)\subseteq\fq$ if and only if $\fa+(\fb\cap\fc)\subseteq\fq$. Therefore
 $\sqrt{(\fa+\fb)\cap(\fa+\fc)} =\sqrt{\fa+(\fb\cap\fc)}$). Hence $\sqrt{\fa+\ann_R(M/S)}=\fm$ and  we deduce from the Independence Theorem
 $$\h c{\fa}M\cong\h c{\fa}{M/S}\cong\h c{\fm}{M/S}.$$
  Also, since $\ass_R(M/S)=\Delta=\Sigma\subseteq \assh_R(M)$ and $\Delta$ is not empty, we have $\dim_R(M/S)=\dim_R(M)=c$ and $\assh_R(M/S)=\ass_R(M/S)$. Therefore   the previous theorem yields
 $$\ann_R\left(\h c{\fa}M\right)=\ann_R\left(\h c{\fm}{M/S}\right)=\ann_R(M/S).$$

 {\bf Case 2:} Assume that $c=\dim_R(M)$ and $R$ is not necessarily local. As the before case, we have $\Delta=\Sigma$ and  $S=T$. To prove
  $\ann_R(\h c{\fa}M)\subseteq\ann_R(M/S)$, assume that $x\in R$ and
 $x M\nsubseteq S$ and we show  $x\h c{\fa}{M}\neq 0$. By (ii), $\h c{\fa}{xM}\neq 0$. Thus there exists a prime ideal $\fm$ such that $\h c{\fa R_\fm}{xM_\fm}\neq 0$ and consequently $\h c{\fa\widehat{R_\fm}}{x\widehat{M_\fm}}\neq 0$. Therefore $c\leq\operatorname {cd}_{\widehat{R_\fm}}({\fa\widehat{R_\fm}}, {x\widehat{M_\fm}})$. It follows from Lemma \ref{cd} and Grothendieck's Vanishing Theorem that
 $$c\leq \operatorname {cd}_{\widehat{R_\fm}}({\fa\widehat{R_\fm}}, {x\widehat{M_\fm}})\leq \operatorname {cd}_{\widehat{R_\fm}}({\fa\widehat{R_\fm}}, {\widehat{M_\fm}})\leq\dim_{\widehat{R_\fm}} (\widehat{M_\fm})\leq\dim_R(M)=c.$$
  Hence
 $\dim_{\widehat{R_\fm}} (\widehat{M_\fm})=\operatorname {cd}_{\widehat{R_\fm}}({\fa\widehat{R_\fm}}, {\widehat{M_\fm}})=c$. Since $\h c{\fa\widehat{R_\fm}}{x\widehat{M_\fm}}\neq 0$, we obtain $x{\widehat{M_\fm}}\nsubseteq S'$, where $S'$ is the largest submodule of ${\widehat{M_\fm}}$ such that $\operatorname {cd}_{\widehat{R_\fm}}({\fa\widehat{R_\fm}}, S')<c$. So, by the complete case,  we have $x\h c{\fa\widehat{R_\fm}}{\widehat{M_\fm} }\neq 0$ and therefore
  $x\h c{\fa}{M}\neq 0$. This proves the claimed inclusion (in fact equality) in the case where $c=\dim_R(M)$.

 {\bf Case 3:} Assume $c<\dim_R(M)$.  If $\Sigma=\emptyset$, then $T=M$ and there is nothing to prove. Assume $\Sigma\neq\emptyset$. Since $\cd{\fa}{T}\leq c$, the short exact sequence
   $$0\rightarrow T\rightarrow M\rightarrow M/T\rightarrow 0$$ induces the epimorphism
    $\h c{\fa}M\rightarrow\h c{\fa}{M/T}.$
  It follows that $\ann_R(\h c{\fa}M)\subseteq\ann_R(\h c{\fa}{M/T})$. Since $\ass_R (M/T)=\Sigma\neq\emptyset$, we have
    $$\cd{\fa}{M/T}=\max_{\fp\in\ass_R(M/T)}\cd{\fa}{R/\fp}=\max_{\fp\in\Sigma}\ \cd{\fa}{R/\fp}=c$$
    and
      $$\dim_R(M/T)=\max_{\fp\in\ass_R(M/T)}\dim_R(R/\fp)=\max_{\fp\in\Sigma}\ \dim_R(R/\fp)=c.$$
 Thus  $\dim_R (M/T)=\cd{\fa}{M/T}=c$, and so   $\ann_R(\h c{\fa}{M/T})=\ann_R({M/T})$  by  the previous case. This completes the proof.
\end{proof}

When $(R, \fm)$ is a Cohen-Macaulay local ring, $\fa$ is a non-zero proper ideal of $R$ and $t=\grad{\fa}R$, Bahmanpour calculated the annihilator   of $\h t{\fa}R$  in \cite[Theorem 2.2]{ba}. The following theorem generalizes his result for Cohen-Macaulay modules whenever $R$ is  not necessarily local.
\begin{lem}[{\cite[Theorem 2.1]{ctt}}] Let  $\fa$ be  an ideal of $R$, and $M$   a finitely generated  $R$-module such that $\fa M\neq M$. Then
$$\ass_R\left(\h {\grad{\fa}M}{\fa}M\right)=\{\fp\in V(\fa): \depth_{R_\fp}{M_\fp}=\grad{\fa}M\}. $$
\end{lem}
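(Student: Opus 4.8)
The plan is to argue by induction on $t:=\grad{\fa}M$, which is finite since $\fa M\neq M$ and which, by Grothendieck's characterization of grade via local cohomology, satisfies $\h i{\fa}M=0$ for all $i<t$ and $\h t{\fa}M\neq0$. (I use the usual convention that the zero module has infinite depth, so the right-hand set is automatically contained in $\supp_R(M)$.) For the base case $t=0$ I would note $\h0{\fa}M=\gam{\fa}M$ and check directly that $\ass_R\bigl(\gam{\fa}M\bigr)=\ass_R(M)\cap V(\fa)$: the inclusion ``$\subseteq$'' holds because $\gam{\fa}M$ is a submodule of $M$ with support in $V(\fa)$, and ``$\supseteq$'' because if $\fp=\ann_R(m)\in\ass_R(M)$ with $\fa\subseteq\fp$ then $\fa m=0$, so $m\in\gam{\fa}M$. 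Since for $\fp\in V(\fa)$ one has $\depth_{R_\fp}M_\fp=0$ exactly when $\fp\in\ass_R(M)$, the base case follows.

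For the inductive step $t\geq1$ I would use prime avoidance: as $\grad{\fa}M>0$ we have $\gam{\fa}M=0$, so $\fa$ is contained in no prime of $\ass_R(M)$ and there is an $M$-regular element $x\in\fa$. Put $\overline M=M/xM$; then $\fa\overline M=\fa M/xM\neq\overline M$, and $\grad{\fa}{\overline M}=t-1$ (complete $x$ to a maximal $M$-regular sequence in $\fa$ of length $t$ and discard $x$). Feeding $0\to M\xrightarrow{x}M\to\overline M\to0$ into the local cohomology long exact sequence and using $\h{t-1}{\fa}M=0$ gives
$$0\longrightarrow\h{t-1}{\fa}{\overline M}\longrightarrow\h t{\fa}M\stackrel{x}{\longrightarrow}\h t{\fa}M,$$
so $\h{t-1}{\fa}{\overline M}\cong(0:_{\h t{\fa}M}x)$. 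The key observation is that $\h t{\fa}M$ is $\fa$-torsion and $x\in\fa$, so multiplication by $x$ is locally nilpotent on it; writing $H_n=(0:_{\h t{\fa}M}x^n)$, the chain $H_1\subseteq H_2\subseteq\cdots$ exhausts $\h t{\fa}M$, and from $0\to H_{n-1}\to H_n\to H_n/H_{n-1}\to0$ together with the monomorphism $H_n/H_{n-1}\hookrightarrow H_1$ induced by multiplication by $x^{n-1}$ one gets $\ass_R(H_n)\subseteq\ass_R(H_1)$ by a short induction on $n$. Since $\ass$ commutes with directed unions this yields $\ass_R\bigl(\h t{\fa}M\bigr)=\ass_R(H_1)=\ass_R\bigl(\h{t-1}{\fa}{\overline M}\bigr)$.

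Applying the induction hypothesis to $\overline M$ then gives $\ass_R\bigl(\h t{\fa}M\bigr)=\{\fp\in V(\fa):\depth_{R_\fp}\overline M_\fp=t-1\}$, and the final task is to identify this with $\{\fp\in V(\fa):\depth_{R_\fp}M_\fp=t\}$: for $\fp\in V(\fa)\cap\supp_R(M)$ flatness of localization makes $x$ an $M_\fp$-regular element of $\fp R_\fp$, hence $\depth_{R_\fp}\overline M_\fp=\depth_{R_\fp}M_\fp-1$; for $\fp\in V(\fa)\setminus\supp_R(M)$ both modules localize to zero and $\fp$ lies in neither set. I expect the main obstacle to be the inductive step, and inside it the reduction $\ass_R\bigl(\h t{\fa}M\bigr)=\ass_R\bigl(\h{t-1}{\fa}{\overline M}\bigr)$ through the local-nilpotence filtration on $\h t{\fa}M$; the base case and the closing depth computation are routine.
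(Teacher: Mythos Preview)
The paper does not supply a proof of this lemma; it is quoted verbatim from \cite[Theorem~2.1]{ctt} and used as a black box in the proof of Theorem~\ref{annh3}. So there is no argument in the paper to compare against.

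Your induction is correct. The base case is the standard identification $\ass_R\bigl(\gam{\fa}M\bigr)=\ass_R(M)\cap V(\fa)$, and your translation to the depth condition is right. In the inductive step the crucial equality $\ass_R\bigl(\h t{\fa}M\bigr)=\ass_R\bigl((0:_{\h t{\fa}M}x)\bigr)$ is exactly where one needs an argument, and your filtration $H_n=(0:_{\h t{\fa}M}x^n)$ together with the injections $H_n/H_{n-1}\hookrightarrow H_1$ given by $x^{n-1}$ does the job cleanly: one gets $\ass_R(H_n)\subseteq\ass_R(H_{n-1})\cup\ass_R(H_1)$, hence $\ass_R(H_n)\subseteq\ass_R(H_1)$ by induction on $n$, and then $\ass_R\bigl(\h t{\fa}M\bigr)=\bigcup_n\ass_R(H_n)=\ass_R(H_1)$ since every associated prime is the annihilator of a single element lying in some $H_n$. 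The final depth bookkeeping is fine: for $\fp\in V(\fa)\cap\supp_R(M)$ the image of $x$ in $R_\fp$ lies in $\fp R_\fp$ and remains $M_\fp$-regular, so $\depth_{R_\fp}\overline M_\fp=\depth_{R_\fp}M_\fp-1$; outside $\supp_R(M)$ both depths are infinite. One small point you might make explicit is that $\overline M_\fp\neq0$ for $\fp\in V(\fa)\cap\supp_R(M)$ (Nakayama), so the depth formula is meaningful there.
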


Let  $M$ be an $R$-module. For  $\fp\in\supp_R(M)$, the $M$-height of $\fp$, denoted $\Ht M{\fp}$, is the supremum  of the lengths $t$ of strictly descending   chains $$\fp=\fp_0\supset\fp_1\ldots\supset\fp_t$$
of prime ideals in $\supp_R(M)$. For an arbitrary ideal $\fa$, we define the $M$-height of $\fa$, denoted $\Ht M{\fa}$,   by
$$\Ht{M}{\fa}=\inf\{\Ht M{\fp}: \fp\in\supp_R(M)\cap \operatorname{V}(\fa)\}.$$
In particular, if $\supp_R(M)\cap \operatorname{V}(\fa)=\emptyset$, then $\Ht{M}{\fa}=\inf\emptyset=\infty$.

\begin{thm}\label{annh3} Let $\fa$ be an ideal of $R$, $M$  a non-zero finitely generated Cohen-Macaulay $R$-module, and
 $0=M_1\cap\hdots\cap M_n$ with $\ass_R(M/M_i)=\fp_i$ for all $1\leq i\leq n$ a minimal primary decomposition of the zero submodule  of $M$. Then, for each  $t\in\N_0$,
 $$\ann_R\left(\h t{\fa}M\right)\subseteq\ann_R\left({M}/{\bigcap_{\Ht{M}{\fa+\fp_i}=t } M_i}\right),$$
  Moreover,   if $M\neq\fa M $ and $t=\grad {\fa}M$, then equality holds.
\end{thm}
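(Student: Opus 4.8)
\emph{Step 1 ($\ann_R(\h t{\fa}M)\subseteq\ann_R(M/N_t)$ for every $t\in\N_0$, where $N_t=\bigcap_{\Ht{M}{(\fa+\fp_i)}=t}M_i$).} Fix $i$ with $\Ht{M}{(\fa+\fp_i)}=t$ and $x\in\ann_R(\h t{\fa}M)$; the goal is $xM\subseteq M_i$. First I would choose a minimal prime $\fq$ of $\fa+\fp_i$ with $\dim_{R_\fq}M_\fq=t$ (such $\fq$ realizes $\Ht{M}{(\fa+\fp_i)}$ and lies in $\supp_R(M)$ since it contains $\fp_i$). Then $M_\fq$ is Cohen--Macaulay of dimension $t$; $\fp_iR_\fq$ is a minimal, hence $t$-dimensional, prime of $\supp_{R_\fq}(M_\fq)$ by equidimensionality, so $\dim(R_\fq/\fp_iR_\fq)=t$; and $\sqrt{(\fa+\fp_i)R_\fq}=\fq R_\fq$, so $\fa$ is primary to the maximal ideal modulo $\fp_iR_\fq$. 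By Grothendieck's vanishing and nonvanishing theorems \cite{bs}, $\cd{\fa R_\fq}{R_\fq/\fp_iR_\fq}=t$, and hence (using $\fp_iR_\fq\in\ass_{R_\fq}(M_\fq)$ and the remark following Lemma~\ref{cd}) $\cd{\fa R_\fq}{M_\fq}=t=\dim_{R_\fq}M_\fq$. Now Theorem~\ref{annh2}, applied over $R_\fq$ to $\fa R_\fq$ in the case $c=\dim$, gives $\ann_{R_\fq}(\h t{\fa R_\fq}{M_\fq})=\ann_{R_\fq}(M_\fq/S^t_{M_\fq}(0))$, and by part (i) of that theorem $S^t_{M_\fq}(0)$ is an intersection of primary components of $0$ in $M_\fq$ including the one at $\fp_iR_\fq$, which --- by localizing $0=\bigcap_jM_j$ at $\fq$ --- equals $(M_i)_\fq$; thus $S^t_{M_\fq}(0)\subseteq(M_i)_\fq$ and $x/1\in\ann_{R_\fq}(M_\fq/(M_i)_\fq)=\ann_R(M/M_i)R_\fq$. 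So $sx\in\ann_R(M/M_i)$ for some $s\in R\setminus\fq$, and since $M_i$ is $\fp_i$-primary while $s\notin\fq\supseteq\fp_i$, this forces $xM\subseteq M_i$. As $i$ was arbitrary, $x\in\ann_R(M/N_t)$.

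\emph{Step 2 (replacing $\h t{\fa}M$ by a finitely generated module when $t=\grad{\fa}{M}$).} Assume now $t=\grad{\fa}{M}$ and $\fa M\neq M$; note $\Ht{M}{(\fa+\fp_i)}\geq t$ for every $i$. I would pick a maximal $M$-regular sequence $\underline x=x_1,\dots,x_t$ inside $\fa$. For $N\geq1$ the elements $x_1^N,\dots,x_t^N$ still form an $M$-regular sequence in $\fa$, so $\bar M_N:=M/(x_1^N,\dots,x_t^N)M$ is a finitely generated Cohen--Macaulay $R$-module, and iterating the long exact local cohomology sequences of $0\to M\xrightarrow{\,x_j^N\,}M\to M/x_j^NM\to0$ --- using the vanishing of local cohomology below the grade to turn the connecting maps into isomorphisms onto the relevant colon submodules --- identifies $\gam{\fa}{\bar M_N}$ with the submodule $(0:_{\h t{\fa}M}(x_1^N,\dots,x_t^N))$ of $\h t{\fa}M$. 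Because $(\underline x)\subseteq\fa$, every element of $\h t{\fa}M$ is annihilated by a power of $(\underline x)$, so $\h t{\fa}M=\bigcup_N(0:_{\h t{\fa}M}(x_1^N,\dots,x_t^N))$; the annihilators of these submodules form a descending chain of ideals of the Noetherian ring $R$ and therefore stabilize, so $\ann_R(\h t{\fa}M)=\ann_R(\gam{\fa}{\bar M_N})$ for all $N\gg0$.

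\emph{Step 3 (a primary-decomposition computation in $\bar M_N$).} The $\fa$-torsion submodule $\gam{\fa}{\bar M_N}$ is the intersection of the primary components of $0$ in $\bar M_N$ at those $\fP\in\ass_R(\bar M_N)$ with $\fa\not\subseteq\fP$, so it embeds into $Q_N:=\bar M_N\big/\bigcap_{\fP\in W}(\text{$\fP$-primary component of }0\text{ in }\bar M_N)$, where $W:=\{\fP\in\ass_R(\bar M_N):\fa\subseteq\fP\}$. Since $M$ is Cohen--Macaulay and $\underline x$ is $M$-regular, $W=\{\fP\in\supp_R(M):\fa\subseteq\fP,\ \Ht{M}{\fP}=t\}=\ass_R(\h t{\fa}M)$ by \cite[Theorem 2.1]{ctt}, and $\bar M_N$ is unmixed, so each of these components is $\ker(\bar M_N\to(\bar M_N)_\fP)$. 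Put $\Sigma:=\{\fp_i:\Ht{M}{(\fa+\fp_i)}=t\}$ and $N_t'=\bigcap_{\fp_i\notin\Sigma}M_i$. For $\fP\in W$ every minimal prime $\fp_k\subseteq\fP$ of $M$ satisfies $t=\Ht{M}{\fa}\leq\Ht{M}{(\fa+\fp_k)}\leq\Ht{M}{\fP}=t$, so $\fp_k\in\Sigma$; hence $\ass_R(M_\fP)\subseteq\Sigma$, whence $(N_t')_\fP=M_\fP$, and since $N_t\cap N_t'=\bigcap_jM_j=0$ we get $(N_t)_\fP=0$. Therefore, if $y\in\ann_R(M/N_t)$ then $yM\subseteq N_t$, so $yM_\fP=0$ for every $\fP\in W$, so the image of $yM$ in $\bar M_N$ lies in each of the chosen components and $yQ_N=0$; consequently $y\in\ann_R(\gam{\fa}{\bar M_N})=\ann_R(\h t{\fa}M)$ for $N\gg0$. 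Together with Step~1 this gives $\ann_R(\h t{\fa}M)=\ann_R(M/N_t)$.

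The main obstacle is Step 2 --- recognising that $\h t{\fa}M$, which is generally not finitely generated, nonetheless has, for $N\gg0$, the \emph{same} annihilator as the finitely generated module $\gam{\fa}{M/(x_1^N,\dots,x_t^N)M}$ --- since this is exactly what reduces the equality to the finite primary-decomposition bookkeeping of Step 3. A secondary subtlety is the identity $\cd{\fa R_\fq}{M_\fq}=\dim_{R_\fq}M_\fq$ in Step 1, which rests on the equidimensionality of the Cohen--Macaulay module $M_\fq$ and on Grothendieck nonvanishing over $R_\fq/\fp_iR_\fq$.
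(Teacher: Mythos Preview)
Your argument is correct. Step~1 is essentially the paper's proof of the inclusion, with a cosmetic difference: you localize at the same prime $\fq$ and invoke Theorem~\ref{annh2} (in the case $c=\dim$) directly on $M_\fq$, whereas the paper applies Theorem~\ref{annh} together with the Independence Theorem to $(M/M_i)_\fq$ and then uses the epimorphism $\h t{\fa R_\fq}{M_\fq}\twoheadrightarrow\h t{\fa R_\fq}{(M/M_i)_\fq}$. Both routes land on $x/1\in\ann_{R_\fq}\bigl(M_\fq/(M_i)_\fq\bigr)$ and finish with the $\fp_i$-primary property of $M_i$.

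For the equality at $t=\grad{\fa}M$ the two proofs genuinely diverge. The paper argues by contrapositive and by the \emph{same} localization trick: given $x$ with $x\h t{\fa}M\neq0$, it picks $\fq\in\ass_R\bigl(\h t{\fa}M\bigr)$ with $x\h t{\fa R_\fq}{M_\fq}\neq0$, uses the lemma from \cite{ctt} to get $\Ht M\fq=t$, applies Theorem~\ref{annh} (via Independence) over $R_\fq$ to obtain $\ann_{R_\fq}\bigl(\h t{\fa R_\fq}{M_\fq}\bigr)=\ann_{R_\fq}M_\fq$, and then checks $\bigl(\bigcap_{\fp_i\in\Sigma}M_i\bigr)_\fq=0$, so that $xM\nsubseteq\bigcap_{\fp_i\in\Sigma}M_i$. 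This is short and reuses exactly the machinery already set up for the first inclusion. Your Steps~2--3 instead identify $\gam{\fa}{M/(x_1^N,\dots,x_t^N)M}$ with $(0:_{\h t{\fa}M}(x_1^N,\dots,x_t^N))$, stabilize annihilators, and then carry out a primary-decomposition computation inside the unmixed Cohen--Macaulay module $\bar M_N$. This works, and has the virtue of making the reduction to a finitely generated module explicit (your citation of \cite{ctt} is in fact unnecessary: what Step~3 really uses is just that $\ass_R(\bar M_N)$ consists of primes of $M$-height $t$), but it is substantially longer than the paper's three-line localization argument.
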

\begin{proof}
Set  $\Sigma(t)=\{\fp\in\ass_R(M): \Ht{M}{\fa+\fp}=t\}.$ To prove the claimed inclusion, assume that $x\in R$ and  $x\notin\ann_R({M}/{\bigcap_{\fp_i\in\Sigma(t)} M_i})$ and we show $x\notin\ann_R(\h t{\fa}M)$. Hence  $xM\nsubseteq M_i$ for some $\fp_i\in\Sigma(t)$. Therefore $\ass_R(x(M/M_i))=\ass_R(M/M_i)=\{\fp_i\}$. Suppose that $\fq$ is a minimal prime  ideal of $\fa+\fp_i$ such that $\Ht{M}{\fq}=\Ht{M}{\fa+\fp_i}=t$. Then
$$\ass_{R_\fq}(x(M/M_i)_\fq)=\ass_{R_\fq}(M/M_i)_\fq=\{\fp_iR_\fq\}.$$
Therefore
\begin{align*}
\sqrt{\fa R_\fq+\ann_{R_\fq}(M/M_i)_\fq}
=\sqrt{\fa R_\fq+\fp_i R_\fq}=\fq R_\fq.
\end{align*}
Also, since $M_\fq$ is Cohen-Macaulay and $\fp_i R_\fq\in\ass_{R_\fq}(M_\fq)$, we have $\dim_{R_\fq}(R_\fq/\fp_i R_\fq)=\dim_{R_\fq}(M_\fq)=t$. Hence
$\dim_{R_\fq}\left((M/M_i)_\fq\right)=t$
and, by Theorem \ref{annh} in view of the Independence Theorem, we have
$$\ann_{R_\fq}\left(\h t{\fa R_\fq}{(M/M_i)_\fq}\right)=\ann_{R_\fq}\left(\h t{\fq R_\fq}{(M/M_i)_\fq}\right)=\ann_{R_\fq}(M/M_i)_\fq.$$
 Thus $x\h t{\fa R_\fq}{(M/M_i)_\fq}\neq 0$ because $x(M/M_i)_\fq\neq 0$.
On the other hand, the exact sequence $$0\rightarrow (M_i)_\fq\rightarrow M_\fq\rightarrow (M/M_i)_\fq\rightarrow 0$$
induces the epimorphism $\h t{\fa R_\fq}{M_\fq}\rightarrow\h t{\fa R_\fq}{(M/M_i)_\fq}.$ Thus $x\h t{\fa R_\fq}{M_\fq}\neq 0$ and consequently $x \h t{\fa}M\neq 0$. This proves the claimed inclusion.

 Finally, assume $t=\grad{\fa}M$ and we prove the reverse inclusion. Let $x\in R$ be such that $x\h t{\fa}M\neq 0$. Hence, there exists $\fq\in\ass_R(\h t{\fa}M)\subseteq \supp_R(M/\fa M)$ such that $x\h t{\fa R_\fq}{M_\fq}\neq 0$. By above lemma, $\Ht {M}\fq=\Ht M\fa$, and hence $\fq$ is a minimal prime ideal of $\fa+\ann_R(M)$. Since $M_\fq$ is a Cohen-Macaulay module of dimension $t$,   Theorem \ref{annh} and  Independence Theorem yield
 $$\ann_{R_\fq}\left(\h t{\fa R_\fq}{M_\fq}\right)=\ann_{R_\fq}\left(\h t{\fq R_\fq}{M_\fq}\right)=\ann_{R_\fq}(M_\fq),$$
  and so we have $xM_\fq\neq 0$.
 If  $\fq\in\supp_R\left({\bigcap_{\fp_i\in\Sigma(t)} M_i}\right)$, then there is a $\fp\in\ass_R\left({\bigcap_{\fp_i\in\Sigma(t)} M_i}\right)=\ass_R(M)\setminus \Sigma(t)$ such that $\fp\subseteq\fq$. Therefore  $$t=\Ht M\fa\leq\Ht M{\fa+\fp}\leq\Ht M\fq=t.$$
 Hence $\Ht M{\fa+\fp}=t$, and so $\fp\in\Sigma(t)$,  a contradiction. Thus $\left({\bigcap_{\fp_i\in\Sigma(t)} M_i}\right)_\fq=0$. It follows that $xM_\fq\nsubseteq \left({\bigcap_{\fp_i\in\Sigma(t)} M_i}\right)_\fq$ and consequently $xM\nsubseteq {\bigcap_{\fp_i\in\Sigma(t)} M_i}$. This proves the claimed equality in the case where $t=\grad {\fa}M$ and completes the proof.
\end{proof}

\section*{Acknowledgements}
The author is deeply grateful to the referee for a very
careful reading of the manuscript and many valuable suggestions in improving the quality of the paper. Also, the author would like to thank Professor Hossein Zakeri for his careful reading of the first draft and many helpful suggestions.

  \end{document}